\definecolor{brown}{rgb}{0.59, 0.29, 0.0}
\theoremstyle{plain} 
\newtheorem{lem}{Lemma}[section] 
\newtheorem{prop}{Proposition}[section]
\theoremstyle{definition}
\newtheorem{defn}{Definition}[section]
\newtheorem{ex}{Example}[section]
\newcommand{\cond}{\textbf{C}\!\!}
\theoremstyle{remark}
\newtheorem{rem}{Remark}[section]
\newtheorem*{notation*}{Notation}
\numberwithin{equation}{section}
\newcommand{\nset}{\mathbb{N}}
\newcommand{\rset}{\mathbb{R}}
\newcommand{\qset}{\mathbb{Q}}
\newcommand{\ind}{\mathbf{1}}
\newcommand{\PP}{\Pr}
\newcommand{\PE}{\operatorname{\mathbb{E}}}
\newcommand{\ud}{\,\mathrm{d}} 
\newcommand{\point}{\,\cdot\,}
\newcommand{\domain}{\mathbb{D}}
\newcommand{\Espace}{\mathbb{E}}
\newcommand{\closeds}{\mathcal{F}}
\newcommand{\opens}{\mathcal{G}}
\newcommand{\compacts}{\mathcal{K}}
\newcommand{\Fdomain}{\domain\times\rset}
\newcommand{\uscD}{\mathrm{USC}(\domain)}
\newcommand{\borclosed}{\mathcal{B}(\closeds)}
\numberwithin{equation}{section}
\newcommand{\Fell}{\mathcal{F}}
\newcommand{\epi}{\operatorname{epi}}
\newcommand{\hypo}{\operatorname{hypo}}
\newcommand{\GEV}{\operatorname{GEV}}
\newcommand{\abs}[1]{\lvert{#1}\rvert}
\newcommand{\eps}{\varepsilon}
\newcommand{\eqd}{\stackrel{d}{=}}
\begin{document}
\title[Marginal standardization of upper semicontinuous
processes]{Marginal standardization of upper semicontinuous
  processes. With application to max-stable processes}

\author[A. Sabourin]{Anne Sabourin 
}
\author[J. Segers]{Johan Segers}
\address[A. Sabourin]{ LTCI, CNRS, Télécom ParisTech, Université Paris-Saclay, 75013,
  Paris, France}
\address[J. Segers]{Universit\'e catholique de Louvain, Institut de statistique, biostatistique et sciences actuarielles, Voie du Roman Pays 20, B-1348 Louvain-la-Neuve, Belgium.}

\date{\today}

\begin{abstract}
Extreme-value theory for random vectors and stochastic processes with continuous trajectories is usually formulated for random objects all of whose univariate marginal distributions are identical. In the spirit of Sklar's theorem from copula theory, such marginal standardization is carried out by the pointwise probability integral transform. Certain situations, however, call for stochastic models whose trajectories are not continuous but merely upper semicontinuous (usc). Unfortunately, the pointwise application of the probability integral transform to a usc process does in general not preserve the upper semicontinuity of the trajectories. In the present work, we give sufficient conditions for marginal standardization of usc processes to be possible, and we state a partial extension of Sklar's theorem for usc processes. We specialize the results to max-stable processes whose marginal distributions and normalizing sequences are allowed to vary with the coordinate.

\emph{Key words.} extreme-value theory; max-stable processes; semicontinuous processes; copulas 

\end{abstract}

\maketitle

\section{Introduction}


 

\bgroup
A common way to describe multivariate max-stable distributions is as follows: their margins are univariate max-stable distributions; after standardization of the marginal distributions to a common one, the joint distribution has a specific representation, describing the dependence structure. The separation into margins and dependence is in line with Sklar's theorem \cite{sklar59}, which provides a decomposition of a multivariate distribution into its margins and a copula, that is, a multivariate distribution with standard uniform margins. If the margins of the original distribution are continuous, the copula is unique and can be found by applying the probability integral transform to each variable. Conversely, to recover the original distribution, it suffices to apply the quantile transformation to each copula variable. Although it is more common in extreme-value theory to standardize to the Gumbel or the unit-Fr\'echet distribution rather than to the uniform distribution, the principle is the same. The advantage of breaking up a distribution into its margins and a copula is that both components can be modelled separately.

Applications in spatial statistics have spurred the development of extreme-value theory for stochastic processes. If the trajectories of the process are continuous almost surely, then the process can be reduced to a process with standardized margins and continuous trajectories via the probability integral transform applied to each individual variable. Conversely, the original process can be recovered from the standardized one by applying the quantile transform to each standardized variable. Moreover, these maps, sending one continuous function to another one, are measurable with respect to the sigma-field on the space of continuous functions that is generated by the finite-dimensional cylinders. These results hinge on the following two properties. First, the marginal distributions of a stochastic process with continuous trajectories depend continuously on the index variable. Second, the distribution of the random continuous function associated to the process is determined by the finite-dimensional distributions.

For stochastic processes with upper semicontinuous (usc) trajectories, however, the two properties mentioned above do not hold: the marginal distributions need not depend in a continuous way on the index variable, and the distribution of path functionals such as the supremum of the process is not determined by the finite-dimensional distributions of the process. Still, max-stable processes with usc trajectories have been proposed as models for spatial extremes of environmental variables \cite{davison2012geostatistics,huser2014space,schlather2002models}. As in the continuous case, construction of and inference on such models is carried out by a separation of concerns regarding the margins and the dependence structure. However, up to date, there is no theoretical foundation for such an approach. Another possible application of max-stable usc process is random utility maximization when the alternatives range over a compact metric space rather than a finite set \cite{mcfadden:1981,mcfadden:1989,resnick1991random}.

The present paper aims to fill the gap in theory and develop a framework for marginal standardization for stochastic processes with usc trajectories. For the mathematical framework, we follow \cite{norberg1987semicontinuous} and \cite{resnick1991random} and we work within the space $\uscD$ of usc functions on a locally compact subset $\domain$ of some Euclidean space. The space $\uscD$ is equipped with the hypo-topology: a usc function is identified with its hypograph, a closed subset of $\domain \times \rset$; the hypo-topology on $\uscD$ is then defined as the trace topology inherited from the Fell hit-and-miss topology on the space $\Fell$ of closed subsets of $\domain \times \rset$ \cite{salinetti1986convergence, vervaat:1986}.

The theory is specialized to max-stable usc processes. Our definition of max-stability allows the shape parameter of the marginal distributions to vary with the index variable of the stochastic process. As a consequence, the stabilizing affine transformations in the definition of max-stability may depend on the index variable too. However, the coordinatewise affine transformation of a usc function does not necessarily produce a usc function, so that care is needed in the formulation of the definition and the results. In \cite{dehaan:1984,resnick1991random}, in contrast, the marginal distributions are assumed to be Fr\'echet with unit shape parameter, so that the stabilizing sequences in the definition of max-stability are the same for all margins. The possibility to reduce the general case to  this simpler case is taken for granted, but, as argued in the paper, for usc processes, it is not.

Section~\ref{sec:uscProcesses} sets up the necessary background about usc processes. A general class of measurable transformations on the space of usc functions is introduced in Section~\ref{sec:transfo}. Under regularity conditions on the marginal distributions, this class includes the pointwise probability integral transform and its inverse. This property allows us to state a partial generalization of Sklar's theorem for usc processes in general (Section~\ref{sec:sklar}) and for max-stable ones in particular (Section~\ref{sec:max-stable}). Section~\ref{sec:conclusion} concludes. Some additional results are deferred to the appendices.
\egroup

\section{Random usc processes}
\label{sec:uscProcesses}

We review some essential definitions and properties of random usc
processes, or usc processes for short. The material in this section
may for instance be found in \cite{salinetti1986convergence},
\cite{beer1993topologies} (Chapter~5), \cite{vervaat:1988},~\cite{vervaat:holwerda:1997} and~\cite{molchanov2005theory}(Chapter~1.1 and Appendix~B).

Let $\domain$ be a non-empty, locally compact subset of some finite-dimensional Euclidean space. A function $x : \domain \to [-\infty, \infty]$ is \emph{upper semicontinuous} (usc) if and only $\limsup_{n \to \infty} x(s_n) \le x(s)$ whenever $s_n \to s \in \domain$. An equivalent property is that the set $\{ s \in \domain : \alpha \le x(s) \}$ is closed for each $\alpha \in \rset$. The definition of \emph{lower semicontinuous} (lsc) functions is similar, with a $\liminf$ rather than a $\limsup$ and with the inequalities being reversed.

For $x : \domain \to [-\infty, +\infty]$, the \emph{epigraph} and \emph{hypograph} are commonly defined by
\begin{align*} 
  \epi x 
  &= \{ (s, \alpha) \in \domain \times \rset : x(s) \le \alpha \}, \\
  \hypo x 
  &= \{ (s, \alpha) \in \domain \times \rset : \alpha \le x(s) \}. 
\end{align*}
Observe that $\epi x$ and $\hypo x$ are subsets of $\domain \times \rset$, even if the range of $x$ contains $-\infty$ or $+\infty$. A function is upper or lower semicontinuous if and only if its hypograph or epigraph, respectively, is closed.

Let $\uscD$ be the collection of all upper semicontinuous
functions from $\domain$ into $[-\infty, \infty]$. By identifying the
function $z \in \uscD$ with the set $\hypo z \subset \domain
\times \rset$, any topology on the space $\closeds = \closeds(\domain \times \rset)$ of closed subsets of $\domain \times \rset$ results in a trace topology on the space of usc functions.

The Fell \emph{hit-and-miss} topology on $\closeds$ is defined as follows. Let $\compacts$ and $\opens$ denote the families of compact and open subsets of $\domain\times\rset$, respectively.
A base for the Fell topology on $\closeds$ is the family of sets of the form
\[
  \closeds_{G_1,\dotsc,G_n}^K 
  = 
  \{ 
    F \in \closeds \; : \; 
    F \cap K = \varnothing, \,
    F \cap G_1 \neq \varnothing, \, \dotsc, \, F \cap G_n \neq \varnothing 
  \}
\]
for $K \in \compacts$ and $G_1, \ldots, G_n \in \opens$. A net of closed sets converges to a limit set $F$ if and only if every compact $K$ missed by $F$ is eventually also missed by the net and if every open $G$ hit by $F$ is eventually also hit by the net.

The Fell topology on $\closeds(\domain \times \rset)$ induces a trace
topology on $\uscD$, the \emph{hypo-topology}. 
Subsets of $\uscD$ which are open or closed with respect to the hypo-topology will be referred to as \emph{hypo-open} or \emph{hypo-closed}, respectively.
A sequence
$x_n$ in $\uscD$ is said to \emph{hypo-converge} to $x$ in
$\uscD$ if and only if $\hypo x_n$ converges to $\hypo x$ in
the Fell topology. Since the underlying space $\domain\times \rset$ is
locally compact, Hausdorff, and second countable (LCHS), the space
$\uscD$ thus becomes a compact, Hausdorff, second-countable space (see \emph{e.g.}\ \cite{molchanov2005theory}, Appendix, Theorem B.2). 
A convenient pointwise criterion for hypo-convergence is the following: a sequence $x_n \in \uscD$ hypo-converges to $x \in \uscD$ if, and only if,
\begin{equation}
\label{eq:hypoconv:pointwise}
  \left\{
  \begin{array}{l}
  \displaystyle
  \forall s \in \domain : \forall s_n \in \domain, \, s_n \to s : 
  \limsup_{n \to \infty} x_n(s_n) \le x(s); \\[1ex]
  \displaystyle
  \forall s \in \domain : \exists s_n \in \domain, \, s_n \to s : 
  \liminf_{n \to \infty} x_n(s_n) \ge x(s).
  \end{array}
  \right.
\end{equation}

Let $(\Omega,\mathcal{A}, \PP)$ be a complete probability space and
let $\borclosed$ be the Borel $\sigma$-field on $\closeds =
\closeds(\domain \times \rset)$ generated by the Fell topology. A
random closed set of $\domain \times \rset$ is a Borel measurable map
from $\Omega$ into $\closeds$. Similarly, a usc process $\xi$ on
$\domain$ is a Borel measurable map from $\Omega$ into
$\uscD$. Equivalently, $\hypo \xi$ is a random closed subset of
$\domain \times \rset$ taking values in the collection of all closed
hypographs. Some authors use the term `normal integrand' to refer to
such processes, as a special case of stochastic processes with usc
realizations (\cite{salinetti1986convergence}, p.\ 12). In this work, a usc process is always a function $\xi : \domain \times \Omega \to [-\infty,\infty]$ such that the map $\Omega \to \Fell : \omega \mapsto \hypo \xi(\point, \omega)$ is Borel measurable. 

The \emph{capacity functional} of $\hypo \xi$ is the map $T : \compacts \to [0, 1]$ defined by
\begin{equation*}
  T( K ) = \Pr\{ (\hypo \xi) \cap K \ne \varnothing \}, \qquad K \in \compacts.
\end{equation*}
We also call $T$ the capacity functional of $\xi$ rather than of $\hypo \xi$. Since $\Fdomain$ is LCHS, the Borel $\sigma$-field $\borclosed$ is generated by the sets 
\[
  \mathcal{F}_K 
  = 
  \{ F \in\closeds : F \cap K \neq\varnothing  \},
  \qquad K \in \compacts.
\]
(\cite{molchanov2005theory}, p. 2). 
The collection of complements of the sets $\mathcal{F}_K$ being a $\pi$-system, the capacity functional determines the distribution of a random closed set or a usc process.

We emphasize that the Borel $\sigma$-field on $\uscD$ is strictly larger than the one generated by the finite-dimensional cylinders. Without additional hypotheses, the finite-dimensional distributions of a usc process do not determine its distribution as a usc process. The evaluation mappings $\uscD \to [-\infty, \infty] : x \mapsto x(s)$ being hypo-measurable, a usc process $\xi$ is also a stochastic process (i.e., a collection of random variables) with usc trajectories. The converse is not true, however: for such stochastic processes, the map $\Omega \to \uscD : \omega \mapsto \xi(\,\cdot\,,\omega)$ is not necessarily hypo-measurable; see \cite{rockafellar1998variational}, Proposition~14.28, for a counter-example. Measurability of maps to or from $\uscD$ with respect to the Borel $\sigma$-field induced by the hypo-topology will be referred to as \emph{hypo-measurability}.

\section{Transformations of usc functions}
\label{sec:transfo}

Obviously, a usc function remains usc if multiplied by a positive, continuous function. More generally, we will need to prove that mappings derived from the probability integral transform or the quantile transform are hypo-measurable mappings from $\uscD$ to itself. To do so, we will rely on the function class $\mathcal{U}(\domain)$ defined and studied next.


\begin{defn}
\label{def:UD}
A function $U : \domain \times [-\infty, \infty] \to [-\infty, \infty]$ belongs to the class $\mathcal{U}(\domain)$ if it has the following two properties:
\begin{enumerate}[(a)]
\item
  For every $s \in \domain$, the map $x \mapsto U(s, x)$ is non-decreasing and right-continuous.
\item
  For every $x \in \rset \cup \{ \infty \}$, the map $s \mapsto U(s, x)$ is usc.
\end{enumerate}
\end{defn}

See Remark~\ref{rem:UD:usc} for the reason why we did not include $x = -\infty$ in condition~(b).

\begin{prop}
\label{prop:transfo}
Let $U \in \mathcal{U}(\domain)$.
\begin{enumerate}[(i)]
\item If $z_n$ hypo-converges to $z$ in $\uscD$ and if $s_n \to s$ in $\domain$ as $n \to \infty$, then $\limsup_{n \to \infty} U(s_n, z_n(s_n)) \le U(s, z(s))$.
\item For every $z \in \uscD$, the function $U_\ast(z) : \domain \to [-\infty, \infty]$ defined by $(U_\ast(z))(s) = U(s, z(s))$ belongs to $\uscD$ too.
\item For every compact $K \subset \domain \times \rset$, the set $\{ z \in \uscD : \hypo U_\ast(z) \cap K \neq \varnothing \}$ is hypo-closed.
\item The map $U_\ast : \uscD \to \uscD$ is hypo-measurable.
\end{enumerate}
\end{prop}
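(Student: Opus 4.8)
The plan is to prove the four assertions in the order stated: part~(i) carries all the analytic content, and parts~(ii)--(iv) follow quickly from it via a constant-sequence trick, a compactness argument, and a generator argument, respectively.

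For part~(i), the starting point is the first line of the pointwise criterion~\eqref{eq:hypoconv:pointwise}, which gives $\limsup_{n\to\infty} z_n(s_n) \le z(s)$. I would then transfer this inequality through $U$ by invoking its three defining properties one at a time, distinguishing the possible values of $z(s)$. If $z(s)\in\rset$: for every $\alpha\in\rset$ with $\alpha>z(s)$ one has $z_n(s_n)\le\alpha$ for all large $n$, hence $U(s_n,z_n(s_n))\le U(s_n,\alpha)$ for all large $n$ by monotonicity of $x\mapsto U(s,x)$ (Definition~\ref{def:UD}(a)), and therefore $\limsup_{n} U(s_n,z_n(s_n))\le\limsup_{n}U(s_n,\alpha)\le U(s,\alpha)$ by upper semicontinuity of $s\mapsto U(s,\alpha)$ (Definition~\ref{def:UD}(b), applicable since $\alpha\in\rset$); letting $\alpha\downarrow z(s)$ and using right-continuity of $x\mapsto U(s,x)$ at $z(s)$ yields $\limsup_{n}U(s_n,z_n(s_n))\le U(s,z(s))$. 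The case $z(s)=-\infty$ is handled in the same way, now letting $\alpha\to-\infty$ at the end and using that the monotone, right-continuous map $x\mapsto U(s,x)$ on $[-\infty,\infty]$ satisfies $U(s,\alpha)\downarrow U(s,-\infty)$; this is also why $x=-\infty$ need not appear in condition~(b) (cf.\ Remark~\ref{rem:UD:usc}). Finally, if $z(s)=\infty$ there is nothing to transfer: $U(s_n,z_n(s_n))\le U(s_n,\infty)$ trivially, and $\limsup_{n}U(s_n,\infty)\le U(s,\infty)$ by Definition~\ref{def:UD}(b) with $x=\infty$.

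Part~(ii) follows by applying part~(i) to the constant sequence $z_n\equiv z$, which hypo-converges to $z$, together with an arbitrary sequence $s_n\to s$ in $\domain$: the conclusion $\limsup_n (U_\ast(z))(s_n)\le (U_\ast(z))(s)$ is precisely the upper semicontinuity of $U_\ast(z)$. For part~(iii), set $A_K=\{z\in\uscD:\hypo U_\ast(z)\cap K\neq\varnothing\}$. Since $\uscD$ equipped with the hypo-topology is compact, Hausdorff and second countable, hence metrizable, it suffices to check that $A_K$ is sequentially closed. If $z_n\in A_K$ and $z_n$ hypo-converges to $z$, choose $(s_n,\alpha_n)\in K\cap\hypo U_\ast(z_n)$, so that $\alpha_n\le U(s_n,z_n(s_n))$; by compactness of $K$ a subsequence satisfies $(s_n,\alpha_n)\to(s,\alpha)\in K$, and applying part~(i) along that subsequence gives $\alpha=\lim_n\alpha_n\le\limsup_n U(s_n,z_n(s_n))\le U(s,z(s))$, whence $(s,\alpha)\in K\cap\hypo U_\ast(z)$ and $z\in A_K$.

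For part~(iv), the Borel $\sigma$-field on $\uscD$ coincides with the trace on $\uscD$ of $\borclosed$ (the subspace being second countable) and is therefore generated by the sets $\{z\in\uscD:\hypo z\cap K\neq\varnothing\}$ with $K\in\compacts$. Since $U_\ast^{-1}\bigl(\{z\in\uscD:\hypo z\cap K\neq\varnothing\}\bigr)=A_K$, which is hypo-closed by part~(iii) and hence Borel, and since these sets generate the target $\sigma$-field, $U_\ast$ is hypo-measurable. The only delicate point in the whole argument is the bookkeeping in part~(i): one must invoke monotonicity, right-continuity and upper semicontinuity of $U$ in the correct order and treat the boundary values $z(s)=\pm\infty$ carefully; everything downstream of part~(i) is routine.
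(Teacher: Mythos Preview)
Your proposal is correct and follows essentially the same route as the paper's proof: part~(i) via the pointwise hypo-convergence criterion combined with monotonicity, upper semicontinuity in $s$, and right-continuity in $x$; part~(ii) by the constant-sequence trick; part~(iii) by extracting a convergent subsequence in $K$ and applying~(i); part~(iv) via the generating family $\{z:\hypo z\cap K\neq\varnothing\}$. The only cosmetic difference is that the paper treats the cases $z(s)\in\rset$ and $z(s)=-\infty$ together under the single heading $z(s)<\infty$, whereas you split them; both versions are fine.
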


\begin{proof}
\textit{(i)}
Consider two cases: $z(s) = \infty$ and $z(s) < \infty$.

Suppose first that $z(s) = \infty$. By the assumptions on $U$, we have 
\[ 
  \limsup_{n \to \infty} U(s_n, z_n(s_n)) 
  \le 
  \limsup_{n \to \infty} U(s_n, \infty) 
  \le 
  U(s, \infty) 
  = 
  U(s, z(s)). 
\]

Suppose next that $z(s) < \infty$. Let $y > z(s)$. By hypo-convergence, we have $\limsup_{n \to \infty} z_n(s_n) \le z(s)$; see \eqref{eq:hypoconv:pointwise}. As a consequence, there exists a positive integer $n(y)$ such that $z_n(s_n) \le y$ for all $n \ge n(y)$. By the properties of $U$, we have
\[
  \limsup_{n \to \infty} U(s_n, z_n(s_n)) 
  \le 
  \limsup_{n \to \infty} U(s_n, y) 
  \le 
  U(s, y).
\] 
Since $y > z(s)$ was arbitrary and since $x \mapsto U(s, x)$ is non-decreasing and right-continuous, we find $\limsup_{n \to \infty} U(s_n, z_n(s_n)) \le \inf_{y > z(s)} U(s, y) = U(s, z(s))$.

\textit{(ii)}
In statement (i), set $z_n = z$ to see that $\limsup_{n \to \infty} U(s_n, z(s_n)) \le U(s, z(s))$ whenever $s_n \to s$ in $\domain$ as $n \to \infty$.

\textit{(iii)}
Let $K \subset \domain \times \rset$ be compact. Suppose that $z_n$ hypo-converges to $z$ in $\uscD$ as $n \to \infty$ and that $\hypo U_\ast(z_n) \cap K \neq \varnothing$ for all positive integers $n$. We need to prove that $\hypo U_\ast(z) \cap K \neq \varnothing$ too.

For each positive integer $n$, there exists $(s_n, x_n) \in K$ such that $U(s_n, z_n(s_n)) \ge x_n$. Since $K$ is compact, we can find a subsequence $n(k)$ such that $(s_{n(k)}, x_{n(k)}) \to (s, x) \in K$ as $k \to \infty$. By (i), we have 
\[ 
  U(s, z(s)) \ge \limsup_{k \to \infty} U(s_{n(k)}, z_{n(k)}(s_{n(k)})) \ge \limsup_{k \to \infty} x_{n(k)} = x. 
\]
As a consequence, $(s, x) \in \hypo U_\ast(z) \cap K$.

\textit{(iv)}
The collection of sets of the form $\{ z \in \uscD : \hypo(z) \cap K \ne \varnothing \}$, where $K$ ranges over the compact subsets of $\domain \times \rset$, generates the Borel $\sigma$-field on $\uscD$ induced by the hypo-topology. By (iii), the inverse image under $U_\ast$ of each such set is hypo-closed and thus hypo-measurable. We conclude that $U_\ast$ is hypo-measurable.
\end{proof}

\begin{rem}
\label{rem:UD:usc}
In part (b) of the definition of $\mathcal{U}(\domain)$, we did not include $x = -\infty$. The reason is that this case is automatically included: by Proposition~\ref{prop:transfo}(ii) applied to the function $z(s) \equiv -\infty$, the map $s \mapsto U(s, -\infty)$ is necessarily usc too.

Condition (b) is also necessary for the conclusion of Proposition~\ref{prop:transfo}(ii) to hold: given $x$, define $z(s) \equiv x$.
\end{rem}

A convenient property of $\mathcal{U}(\domain)$ is that it is closed under an appropriate kind of composition. This allows the deconstruction of complicated transformations into more elementary ones.

\begin{lem}\label{lem:compo}~
If~$ U$ and $V$ belong to $\mathcal{U}(\domain)$, the function $W$
defined by $(s, x) \mapsto V(s, U(s, x))$ belongs to
$\mathcal{U}(\domain)$ too, and $W_\ast = V_\ast \circ U_\ast$.
\end{lem}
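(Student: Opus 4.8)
The plan is to check the two defining properties of $\mathcal{U}(\domain)$ for $W$ and then to read off the composition identity. The identity $W_\ast = V_\ast \circ U_\ast$ is the easy half: once $W \in \mathcal{U}(\domain)$ is established, so that $W_\ast$ is a well-defined (hypo-measurable) self-map of $\uscD$ by Proposition~\ref{prop:transfo}, one has, for every $z \in \uscD$ and every $s \in \domain$,
\[
  \bigl( W_\ast(z) \bigr)(s)
  = W(s, z(s))
  = V\bigl( s, U(s, z(s)) \bigr)
  = V\bigl( s, (U_\ast(z))(s) \bigr)
  = \bigl( V_\ast(U_\ast(z)) \bigr)(s),
\]
which is precisely $W_\ast = V_\ast \circ U_\ast$. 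So the substantive part is to verify $W \in \mathcal{U}(\domain)$.

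For property~(a), I would fix $s \in \domain$. The map $x \mapsto W(s, x) = V(s, U(s, x))$ is a composition of the non-decreasing maps $x \mapsto U(s, x)$ and $y \mapsto V(s, y)$, hence non-decreasing. For right-continuity I would use the elementary fact that a composition of non-decreasing right-continuous functions on $[-\infty, \infty]$ is again right-continuous: if $x_n \downarrow x$, then $y_n := U(s, x_n)$ is non-increasing with limit $\inf_n y_n = U(s, x)$ by right-continuity of $U(s, \point)$, and then $V(s, y_n) \to V(s, U(s, x)) = W(s, x)$ by right-continuity of $V(s, \point)$ along $y_n \downarrow U(s, x)$. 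Minor care is needed at the endpoints $x = \pm\infty$, but the same monotone-limit argument covers them.

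For property~(b), the key observation is that nothing new has to be proved: it is a direct instance of Proposition~\ref{prop:transfo}(ii). Fix $x \in \rset \cup \{ \infty \}$. By property~(b) of Definition~\ref{def:UD} applied to $U$, the function $z_x : s \mapsto U(s, x)$ is usc, i.e.\ $z_x \in \uscD$. Applying Proposition~\ref{prop:transfo}(ii) to the function $V$ and the argument $z_x$ then shows that $s \mapsto V\bigl( s, z_x(s) \bigr) = V\bigl( s, U(s, x) \bigr) = W(s, x)$ belongs to $\uscD$, and in particular is usc. Hence $W$ satisfies~(b) as well, so $W \in \mathcal{U}(\domain)$, and the composition identity follows from the display above.

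I do not expect a serious obstacle; the proof is largely bookkeeping. The only points deserving attention are the (standard) stability of right-continuity under composition together with the treatment of the extended-real endpoints in part~(a), and the small observation in part~(b) that one should recognise $s \mapsto V(s, U(s, x))$ as $V_\ast$ applied to the usc function $U(\point, x)$, rather than re-deriving its upper semicontinuity by hand.
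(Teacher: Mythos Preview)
Your proposal is correct and follows essentially the same route as the paper: property~(a) via composition of non-decreasing right-continuous maps, and property~(b) by recognizing $s \mapsto V(s, U(s, x))$ as $V_\ast$ applied to the usc function $U(\point, x)$ and invoking Proposition~\ref{prop:transfo}(ii). The only minor addition is that you spell out the identity $W_\ast = V_\ast \circ U_\ast$ explicitly, which the paper leaves implicit.
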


\begin{proof}
First, fix $s \in \domain$. The map $x \mapsto V(s, U(s, x))$ is non-decreasing: for $-\infty \le x \le y \le \infty$, we have $U(s, x) \le U(s, y)$ and thus $V(s, U(s, x)) \le V(s, U(s, y))$. The map $x \mapsto V(s, U(s, x))$ is also right-continuous: if $x_n$ converges from the right to $x$ as $n \to \infty$, then so does $u_n = U(s, x_n)$ to $u = U(s, x)$ and thus $V(s, u_n)$ to $V(s, u)$.

Next, fix $x \in \rset \cup \{ \infty \}$. We need to show that the function $s \mapsto V(s, U(s, x))$ is usc. But the function $z$ defined by $s \mapsto U(s, x)$ is usc, and, by Proposition~\ref{prop:transfo}, so is the function $s \mapsto V(s, z(s)) = V(s, U(s, x))$.
\end{proof}

\begin{ex}
\label{ex:transfo:simple}
Here are some simple examples of functions $U$ in the class $\mathcal{U}(\domain)$ and the associated mappings $U_* : \uscD \to \uscD$.
\begin{enumerate}[(i)]
\item 
  If $f : [-\infty, \infty] \to [-\infty, \infty]$ is non-decreasing and right-continuous, the function $(s, x) \mapsto f(x)$ belongs to $\mathcal{U}(\domain)$. The associated map $\uscD \to \uscD$ is $z \mapsto f \circ z$.
\item
  If $y \in \uscD$, the functions $(s, x) \mapsto x \vee y(s)$ and $(s, x) \mapsto x \wedge y(s)$ both belong to $\mathcal{U}(\domain)$. The associated maps $\uscD \to \uscD$ are $z \mapsto z \vee y$ and $z \mapsto z \wedge y$, respectively.
\item
  If $a : \domain \to (0, \infty)$ is continuous, then the function $(s, x) \mapsto a(s) \, x$ belongs to $\mathcal{U}(\domain)$. If $b : \domain \to \rset$ is usc, then the map $(s, x) \mapsto x + b(s)$ belongs to $\mathcal{U}(\domain)$. The associated maps $\uscD \to \uscD$ are $z \mapsto az$ and $z \mapsto z + b$, respectively.
\end{enumerate}
\end{ex}

A more elaborate example of a function in $\mathcal{U}(\domain)$ is induced by the collection of right-continuous quantile functions of the marginal distributions of a stochastic process with usc trajectories. See Appendix~\ref{app:quantile} for some background on right-continuous quantile functions.

\begin{lem}
\label{lem:Qsp}
Let $\xi = ( \xi(s) : s \in \domain )$ be a stochastic process indexed
by $\domain$ and with values in $[-\infty, \infty]$. Let $F_s: x\in
[-\infty,\infty]\to F_s(x) = \PP[\xi(s)\le x]$ denote
the right-continuous marginal distribution function of $\xi(s)$. Define
\[
  Q_s(p) = \sup \{ y \in \rset : F_s(y)\le p \},
  \qquad (s, p) \in \domain \times [0, 1].
\]
If $\xi$ has usc trajectories, the function $(s, x) \mapsto Q_s( (x \vee 0) \wedge 1 )$ belongs to $\mathcal{U}(\domain)$.
\end{lem}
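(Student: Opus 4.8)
The goal is to verify the two defining properties of the class $\mathcal{U}(\domain)$ for the function $(s,x) \mapsto Q_s\bigl((x \vee 0) \wedge 1\bigr)$. The truncation $x \mapsto (x \vee 0) \wedge 1$ is monotone and continuous, so by Lemma~\ref{lem:compo} and Example~\ref{ex:transfo:simple}(i)--(ii) it suffices to show that the ``untruncated'' object $(s,p) \mapsto Q_s(p)$, suitably extended to all of $\domain \times [-\infty,\infty]$ by setting it equal to $Q_s(0)$ for $p < 0$ and $Q_s(1)$ for $p > 1$, lies in $\mathcal{U}(\domain)$; equivalently, check property (a) in $p$ for each fixed $s$, and property (b) in $s$ for each fixed $p \in [0,1]$ (the values outside $[0,1]$ being taken care of by the composition with the truncation). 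So the real content is: (a) for fixed $s$, the map $p \mapsto Q_s(p)$ is non-decreasing and right-continuous on $[0,1]$; (b) for fixed $p \in [0,1]$, the map $s \mapsto Q_s(p)$ is usc on $\domain$.

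\textbf{Property (a).} This is a standard fact about right-continuous (upper) quantile functions and should be relegated to or cited from Appendix~\ref{app:quantile}. Monotonicity in $p$ is immediate from the definition $Q_s(p) = \sup\{y \in \rset : F_s(y) \le p\}$, since enlarging $p$ enlarges the set over which the supremum is taken. Right-continuity: if $p_n \downarrow p$, then $Q_s(p_n)$ is non-increasing with limit $\ell \ge Q_s(p)$; one shows $\ell \le Q_s(p)$ by noting that for any $y < \ell$ we have $y < Q_s(p_n)$ for all large $n$, hence $F_s(y) \le p_n$ by definition of the sup (here one uses that $F_s$ is non-decreasing, so $F_s(y) \le p_n$ whenever $y$ is strictly below the sup), and letting $n \to \infty$ gives $F_s(y) \le p$, so $y \le Q_s(p)$; since $y < \ell$ was arbitrary, $\ell \le Q_s(p)$. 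The endpoint cases $p = 0$ and $p = 1$ need only a one-sided version and are handled the same way.

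\textbf{Property (b) — the main obstacle.} Fix $p \in [0,1]$; we must show $\{s \in \domain : \alpha \le Q_s(p)\}$ is closed for every $\alpha \in \rset$, equivalently that $\limsup_{n} Q_{s_n}(p) \le Q_s(p)$ whenever $s_n \to s$. The key input is that $\xi$ has usc trajectories: for every $\omega$, the map $s \mapsto \xi(s,\omega)$ is usc, so $\limsup_n \xi(s_n,\omega) \le \xi(s,\omega)$. The plan is to fix any $y > Q_s(p)$ and show $\limsup_n Q_{s_n}(p) \le y$; since $y > Q_s(p)$ is arbitrary and $Q_s(p)$ can be approached from above, this gives the claim. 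From $y > Q_s(p)$ and the definition of the sup we get $F_s(y) > p$, i.e.\ $\PP[\xi(s) \le y] > p$. Now I want to transfer this to nearby $s_n$: pick $y'$ with $Q_s(p) < y' < y$, so also $F_s(y') > p$. By usc of trajectories, for each $\omega$ with $\xi(s,\omega) \le y'$ (strictly below $y$), eventually $\xi(s_n,\omega) < y$; more carefully, $\{\xi(s) \le y'\} \subseteq \liminf_n \{\xi(s_n) \le y\}$ pathwise — because $\xi(s_n,\omega) \le \limsup_m \xi(s_m,\omega)$ is not quite right, but $\limsup_n \xi(s_n,\omega) \le \xi(s,\omega) \le y' < y$ forces $\xi(s_n,\omega) \le y$ for all large $n$. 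Hence by Fatou's lemma applied to indicator functions,
\[
  p < F_s(y') = \PP\bigl[\xi(s) \le y'\bigr] \le \PP\Bigl[\liminf_n \{\xi(s_n) \le y\}\Bigr] \le \liminf_n \PP\bigl[\xi(s_n) \le y\bigr] = \liminf_n F_{s_n}(y).
\]
Therefore $F_{s_n}(y) > p$ for all large $n$, which by definition of $Q_{s_n}$ gives $Q_{s_n}(p) \le y$ for all large $n$, hence $\limsup_n Q_{s_n}(p) \le y$. Letting $y \downarrow Q_s(p)$ completes the proof of upper semicontinuity. I expect this Fatou/usc-transfer step to be the delicate point: one must be careful to insert the auxiliary level $y'$ strictly between $Q_s(p)$ and $y$ so that the pathwise $\limsup$ inequality is strict enough to land inside $\{\xi(s_n) \le y\}$, and one must remember that we only know $F_s$ is right-continuous (not continuous), which is exactly why the strict inequality $F_s(y') > p$ is available whenever $y' > Q_s(p)$.
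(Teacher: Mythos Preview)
Your proof is correct and follows essentially the same route as the paper: property~(a) is delegated to Appendix~\ref{app:quantile}, and property~(b) is proved via the usc-trajectory inequality $\limsup_n \xi(s_n) \le \xi(s)$ combined with Fatou's lemma to push $F_s(y') > p$ to $\liminf_n F_{s_n}(y) > p$. The paper packages the Fatou step as a separate lemma (Lemma~\ref{lem:limsupX:limsupQp}) and uses the alternative characterization $Q_s(p) = \sup\{x : \Pr[\xi(s) < x] \le p\}$ from Lemma~\ref{prop:quantile:right}(i) in place of your auxiliary level $y'$, but the argument is the same in substance.
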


\begin{proof}
First, by Lemma~\ref{prop:quantile:right}, the map $[0, 1] \ni p \mapsto Q_s(p)$ is non-decreasing and right-continuous for every $s \in \domain$, and hence the same is true for the map $[-\infty, \infty] \ni x \mapsto Q_s( (x \vee 0) \wedge 1 )$.

Second, fix $x \in [-\infty, \infty]$ and write $p = (x \vee 0) \wedge 1 \in [0, 1]$. We need to show that the map $s \mapsto Q_s(p)$ is usc. Let $s_n \to s$ in $\domain$ as $n \to \infty$; we need to show that $Q_s(p) \ge \limsup_{n \to \infty} Q_{s_n}(p)$. Let $Q'$ be the right-continuous quantile function of the random variable $\limsup_{n \to \infty} \xi(s_n)$. The trajectories of $\xi$ are usc, and thus $\xi(s) \ge \limsup_{n \to \infty} \xi(s_n)$, which implies $Q_s(p) \ge Q'(p)$. By Lemma~\ref{lem:limsupX:limsupQp}, we then find $Q_s(p) \ge Q'(p) \ge \limsup_{n \to \infty} Q_{s_n}(p)$, as required.
\end{proof}

\section{Sklar's theorem for usc processes}
\label{sec:sklar}

A $d$-variate copula is the cumulative distribution function of a $d$-dimensional random vector with standard uniform margins. Sklar's \cite{sklar59} celebrated theorem states two things:
\begin{enumerate}[(I)]
\item
  For every copula $C$ and every vector $F_1, \ldots, F_d$ of univariate distribution functions, the function $(x_1, \ldots, x_d) \mapsto C(F_1(x_1), \ldots, F_d(x_d))$ is a $d$-variate distribution function with margins $F_1, \ldots, F_d$.
\item
  Every $d$-variate distribution function $F$ can be represented in this way.
\end{enumerate}
Reformulated in terms of random vectors, the two statements read as follows:
\begin{enumerate}[(I)]
\item
  For every random vector $(U_1, \ldots, U_d)$ with uniform components and for every vector $F_1, \ldots, F_d$ of univariate distribution functions, the random vector $(Q_1(U_1), \ldots, Q_d(U_d))$ has marginal distributions $F_1, \ldots, F_d$, where $Q_j$ is the (right- or left-continuous) quantile function corresponding to $F_j$.
\item
  Every random vector $(X_1, \ldots, X_d)$ can be represented in this way.
\end{enumerate}

We investigate up to what extent these statements hold for usc processes too. According to Proposition~\ref{prop:Sklar:I}, the first statement remains true provided the sections $s \mapsto Q_s(p)$ of the right-continuous quantile functions are usc. According to Proposition~\ref{prop:Sklar:II}, the Sklar representation is valid for usc processes whose marginal distribution functions have usc sections, and even then, the equality in distribution is not guaranteed. In Section~\ref{sec:max-stable}, we will specialize the two propositions to max-stable processes.

\begin{prop}[\`{a} la Sklar I for usc processes]
\label{prop:Sklar:I}
Let $Z$ be a usc process having standard uniform margins. Let $(F_s : s \in \domain)$ be a family of (right-continuous) distribution functions and let $Q_s(p) = \sup \{ x \in \rset : F_s(x) \le p \}$ for all $(s, p) \in \domain \times [0, 1]$. Define a stochastic process $\xi$ by $\xi(s) = Q_s((Z(s) \vee 0) \wedge 1)$ for $s \in \domain$. Then the following two statements are equivalent:
\begin{compactenum}[(i)]
\item
$\xi$ is a usc process with marginal distributions given by $F_s$.
\item
For every $p \in [0, 1]$, the function $s \mapsto Q_s(p)$ is usc.
\end{compactenum}
\end{prop}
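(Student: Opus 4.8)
The plan is to prove the two implications separately. The implication (i) $\Rightarrow$ (ii) should be immediate: a usc process has usc trajectories, and if moreover the one-dimensional marginal laws of $\xi$ are the prescribed $F_s$, then the functions $Q_s$ in the statement are exactly the right-continuous quantile functions of the marginal distributions of $\xi$ in the sense of Lemma~\ref{lem:Qsp}. That lemma then yields $(s,x) \mapsto Q_s((x \vee 0) \wedge 1) \in \mathcal{U}(\domain)$, and condition~(b) of Definition~\ref{def:UD} is precisely the assertion that $s \mapsto Q_s(p)$ is usc for every $p \in [0,1]$ (as $(x \vee 0)\wedge 1$ ranges over $[0,1]$ when $x$ ranges over $\rset \cup \{\infty\}$). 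I expect nothing further is needed here.

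For (ii) $\Rightarrow$ (i), the first step is to check that the function $U$ on $\domain \times [-\infty, \infty]$ defined by $U(s, x) = Q_s((x \vee 0) \wedge 1)$ belongs to $\mathcal{U}(\domain)$. Property~(a) holds because, for fixed $s$, the map $p \mapsto Q_s(p)$ is non-decreasing and right-continuous on $[0,1]$ (Appendix~\ref{app:quantile}, Lemma~\ref{prop:quantile:right}), while $x \mapsto (x \vee 0) \wedge 1$ is non-decreasing and continuous; property~(b) is exactly hypothesis~(ii), since for fixed $x$ the value $p = (x \vee 0) \wedge 1$ is a fixed element of $[0,1]$. Granted $U \in \mathcal{U}(\domain)$, I would then observe that $\xi = U_\ast(Z)$: because $Z$ is a usc process, Proposition~\ref{prop:transfo}(ii) shows each trajectory $\xi(\point, \omega)$ is usc and Proposition~\ref{prop:transfo}(iv) shows $U_\ast$ is hypo-measurable, so $\xi = U_\ast \circ Z$ is hypo-measurable as a composition; hence $\xi$ is a usc process.

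The remaining point for (ii) $\Rightarrow$ (i) is to identify the marginals. Fix $s \in \domain$. Since $Z(s)$ is standard uniform it lies in $[0,1]$ almost surely, so $(Z(s) \vee 0) \wedge 1 = Z(s)$ a.s.\ and thus $\xi(s) = Q_s(Z(s))$ a.s. The elementary sandwich for right-continuous quantile functions (Appendix~\ref{app:quantile}), $\{ Z(s) < F_s(x) \} \subseteq \{ Q_s(Z(s)) \le x \} \subseteq \{ Z(s) \le F_s(x) \}$ for every $x \in \rset$, together with the continuity of the law of $Z(s)$, gives $\PP[\xi(s) \le x] = F_s(x)$. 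This establishes (i).

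The only mildly delicate point is this last step: one must be careful about the boundary behaviour of $Q_s$ (for instance $Q_s(1)$ may equal $+\infty$) and about possible atoms of $F_s$, but since $Z(s)$ has a continuous distribution the two bounding events in the sandwich have the same probability $F_s(x)$, so no genuine difficulty arises. Everything else is an assembly of Proposition~\ref{prop:transfo}, Lemma~\ref{lem:Qsp}, and the quantile facts recalled in Appendix~\ref{app:quantile}.
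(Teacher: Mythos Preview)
Your proposal is correct and follows essentially the same route as the paper: Lemma~\ref{lem:Qsp} for (i)\,$\Rightarrow$\,(ii), and membership of $U(s,x)=Q_s((x\vee 0)\wedge 1)$ in $\mathcal{U}(\domain)$ together with Proposition~\ref{prop:transfo}(iv) for (ii)\,$\Rightarrow$\,(i). The only cosmetic difference is that the paper establishes the marginal identification $\xi(s)\sim F_s$ once at the outset by citing Lemma~\ref{prop:quantile:right}(iv), whereas you redo this within (ii)\,$\Rightarrow$\,(i) via the sandwich $\{Z(s)<F_s(x)\}\subseteq\{Q_s(Z(s))\le x\}\subseteq\{Z(s)\le F_s(x)\}$; both are fine.
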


\begin{proof}
For every $s \in \domain$, the random variable $(Z(s) \vee 0) \wedge 1$ is equal almost surely to $Z(s)$, so that its distribution is uniform on $[0, 1]$ too. By Lemma~\ref{prop:quantile:right}(iv), the distribution function of $\xi(s)$ is given by $F_s$. As a consequence, $Q_s$ is the right-continuous quantile function of the random variable $\xi(s)$.

If (i) holds, then the trajectories $s \mapsto \xi(s)$ are usc. By Lemma~\ref{lem:Qsp}, the function $s \mapsto Q_s(p)$ is usc for every $p \in [0, 1]$. 

Conversely, if (ii) holds, then the function $U$ defined by $(s, x) \mapsto Q_s((x \vee 0) \wedge 1)$ belongs to $\mathcal{U}(\domain)$ defined in Definition~\ref{def:UD}. Let $U_*$ be the associated map $\uscD \to \uscD$; see Proposition~\ref{prop:transfo}. Then $\xi = U_*(Z)$ is a usc process since $Z$ is a usc process and $U_*$ is hypo-measurable by Proposition~\ref{prop:transfo}(iv).
\end{proof}

\begin{prop}[\`{a} la Sklar II for usc processes] 
\label{prop:Sklar:II}
Let $\xi$ be a usc process. Let $F_s(x) = \Pr[ \xi(s) \le x ]$ for $x \in [-\infty, \infty]$ and let $Q_s(p) = \sup \{ x \in \rset : F_s(x) \le p \}$ for $p \in [0, 1]$. Suppose the following two conditions hold:
\begin{compactenum}[(a)] 
\item
For every $s \in \domain$, the distribution of $\xi(s)$ has no atoms in $[-\infty, \infty]$.
\item
For every $x \in \rset \cup \{+\infty\}$, the function $s \mapsto F_s(x)$ is usc.
\end{compactenum}
Then the following statements hold:
\begin{compactenum}[(i)]
\item
The process $Z$ defined by $Z(s) = F_s(\xi(s))$ is a usc process with standard uniform margins.
\item
The process $\tilde{\xi}$ defined by $\tilde{\xi}(s) = Q_s(Z(s)) = Q_s(F_s(\xi(s))$ is a usc process such that $\Pr[ \tilde{\xi}(s) = \xi(s) ] = 1$ for every $s \in \domain$. In particular, the finite-dimensional distributions of $\tilde{\xi}$ and $\xi$ are identical.
\end{compactenum}
\end{prop}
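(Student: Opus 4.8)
The plan is to realize $Z$ and $\tilde\xi$ as images of $\xi$ under hypo-measurable maps of the form $U_\ast$ induced by functions in the class $\mathcal{U}(\domain)$ of Definition~\ref{def:UD}, and to settle the distributional statements by the one-dimensional probability integral transform and quantile inversion.

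For part~(i) I would introduce $U(s,x) = F_s(x)$ on $\domain \times [-\infty,\infty]$. For each fixed $s$, the map $x \mapsto F_s(x)$ is non-decreasing and right-continuous, being a distribution function, so condition~(a) of Definition~\ref{def:UD} holds; condition~(b) for $x \in \rset$ is precisely hypothesis~(b) of the proposition, and for $x = +\infty$ the section $s \mapsto F_s(+\infty) \equiv 1$ is constant, hence usc (condition~(b) is not required at $x = -\infty$). Thus $U \in \mathcal{U}(\domain)$, so by Proposition~\ref{prop:transfo}(iv) the map $U_\ast : \uscD \to \uscD$ is hypo-measurable, and since $Z(s) = U(s,\xi(s)) = (U_\ast(\xi))(s)$ with $\xi$ a usc process, $Z = U_\ast \circ \xi$ is a usc process. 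Hypothesis~(a) makes $F_s$ continuous, so the probability integral transform (cf.\ the material on right-continuous quantile functions in Appendix~\ref{app:quantile}) shows that $Z(s) = F_s(\xi(s))$ is standard uniform for each $s$.

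For part~(ii): since $\xi$ is a usc process its trajectories are usc, so Lemma~\ref{lem:Qsp} applies --- note that the $Q_s$ appearing there is exactly the right-continuous quantile function of $\xi(s)$ used in the proposition --- and the function $V(s,p) = Q_s((p\vee 0)\wedge 1)$ belongs to $\mathcal{U}(\domain)$. Hence the induced map $V_\ast : \uscD \to \uscD$ is hypo-measurable by Proposition~\ref{prop:transfo}(iv), and since $Z(s) \in [0,1]$ we have $(V_\ast(Z))(s) = Q_s((Z(s)\vee 0)\wedge 1) = Q_s(Z(s)) = \tilde\xi(s)$, so that $\tilde\xi = V_\ast \circ Z$ is a usc process. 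It then remains to prove $\Pr[\tilde\xi(s) = \xi(s)] = 1$ for every $s$; granting this, for any finite subset $\{s_1, \dots, s_k\}$ of $\domain$ the event $\{\tilde\xi(s_i) = \xi(s_i) \text{ for } i = 1, \dots, k\}$ has probability one as a finite intersection of almost-sure events, so the finite-dimensional distributions of $\tilde\xi$ and $\xi$ coincide.

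The equality $\Pr[\tilde\xi(s) = \xi(s)] = 1$ is the one-dimensional quantile inversion $Q_s(F_s(\xi(s))) = \xi(s)$ almost surely, which holds because $F_s$ is continuous (hypothesis~(a)): one always has $Q_s(F_s(x)) \ge x$, while $Q_s(F_s(x)) > x$ forces $F_s$ to be constant on a nondegenerate interval $[x, y]$ with $y > x$, so that $x$ lies in $A_s := \bigcup_{q \in \qset}\{ x' < q : F_s(x') = F_s(q)\}$; continuity of $F_s$ gives $\Pr[\xi(s) \in A_s] = 0$, and $\Pr[\xi(s) \in \{-\infty, +\infty\}] = 0$ by the no-atom hypothesis, so the inversion holds a.s. (This is a standard fact about right-continuous quantile functions and could instead be quoted from Appendix~\ref{app:quantile}.) I expect the only real difficulty to be bookkeeping rather than depth: confirming that the $Q_s$ of Lemma~\ref{lem:Qsp} is literally the one in the proposition, that hypothesis~(b) matches condition~(b) of Definition~\ref{def:UD} for $U(s,x) = F_s(x)$ --- in particular that it is \emph{upper} (not lower) semicontinuity of $s \mapsto F_s(x)$ that is needed --- and tracking how the no-atom hypothesis underlies both the uniformity of $Z(s)$ and the inversion identity.
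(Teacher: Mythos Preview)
Your proposal is correct and follows essentially the same approach as the paper: both realize $Z$ as $U_\ast(\xi)$ with $U(s,x)=F_s(x)\in\mathcal{U}(\domain)$ via Proposition~\ref{prop:transfo}(iv), and both obtain $\tilde\xi$ as a usc process by invoking Lemma~\ref{lem:Qsp} (the paper routes this through Proposition~\ref{prop:Sklar:I}, which unpacks to exactly your direct argument), with the a.s.\ inversion $Q_s(F_s(\xi(s)))=\xi(s)$ handled by the no-atom hypothesis (the paper cites Lemma~\ref{prop:quantile:right}(v) for this, matching your parenthetical remark).
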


\begin{proof}
\textit{(i)}
By condition~(a), the marginal distributions of $Z$ are standard uniform. By condition~(b), the function $U$ defined by $(s, x) \mapsto F_s(x)$ belongs to $\mathcal{U}(\domain)$ (Definition~\ref{def:UD}) and we have $Z = U_*(\xi)$ with $U_* : \uscD \to \uscD$ as in Proposition~\ref{prop:transfo}. By item (iv) of that proposition, the map $U_*$ is hypo-measurable, so that $Z$ is a usc process too.

\textit{(ii)}
By Lemma~\ref{prop:quantile:right}(v), we have $\Pr[ \tilde{\xi}(s) = \xi(s) ] = 1$ for every $s \in \domain$. The function $Q_s$ is the right-continuous quantile function of $\xi(s)$ and the stochastic process $(\xi(s) : s \in \domain)$ has usc trajectories. Then requirement~(ii) of Proposition~\ref{prop:Sklar:I} is fulfilled by an application of Lemma~\ref{lem:Qsp}. By item (i) of the same proposition, $\tilde{\xi}$ is a usc process.
\end{proof}

\begin{rem}
Although $\Pr[\tilde{\xi}(s) = \xi(s)] = 1$ for all $s \in \domain$, it is not necessarily true that $\Pr[\tilde{\xi} = \xi] = 1$, and not even that $\tilde{\xi}$ and $\xi$ have the same distribution as usc processes: see Examples~\ref{ex:not_equal_in_law} and~\ref{ex:not_equal_in_law2}. However, if $\Pr[ \forall s \in \domain : 0 < F_s(\xi(s)) < 1 ] = 1$ and if $Q_s(F_s(x)) = x$ for every $s$ and every $x$ such that $0 < F_s(x) < 1$, then clearly $\Pr[ \tilde{\xi} = \xi ] = 1$.
\end{rem}

The following Lemma helps clarifying the meaning of the assumptions of
Proposition~\ref{prop:Sklar:II}. 
\begin{lem}[Regularity of the marginal distributions w.r.t.~the space variable ]\label{lem:equiv_usc_continuous_cdf}
If $\xi$ is a usc process, conditions (a) and (b) in Proposition~\ref{prop:Sklar:II} together imply that the map $s\mapsto F_s(x)=
\PP(\xi(s)\le x)$ 
is continuous,   for any fixed $x\in\rset$.  
\end{lem}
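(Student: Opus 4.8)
The plan is to establish lower semicontinuity of the map $s \mapsto F_s(x)$ and to combine it with the upper semicontinuity already supplied by hypothesis~(b). Since $\domain$ is a metrizable subset of a finite-dimensional Euclidean space, it suffices to argue sequentially: fix $x \in \rset$ and a sequence $s_n \to s$ in $\domain$, and aim to prove $\liminf_{n \to \infty} F_{s_n}(x) \ge F_s(x)$.

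The engine of the proof is a pointwise indicator inequality exploiting that the trajectories of $\xi$ are usc. For every $\omega \in \Omega$ the function $s \mapsto \xi(s, \omega)$ belongs to $\uscD$, so if $\xi(s, \omega) < x$ then $\limsup_{n \to \infty} \xi(s_n, \omega) \le \xi(s, \omega) < x$, and hence $\xi(s_n, \omega) < x$ for all $n$ large enough. Consequently
\[
  \ind\{ \xi(s, \omega) < x \} \le \liminf_{n \to \infty} \ind\{ \xi(s_n, \omega) \le x \},
  \qquad \omega \in \Omega.
\]
Integrating this inequality over $\Omega$ (monotonicity of expectation) and then applying Fatou's lemma to the nonnegative random variables $\ind\{ \xi(s_n) \le x \}$ yields
\[
  \Pr[ \xi(s) < x ]
  = \PE\bigl[ \ind\{ \xi(s) < x \} \bigr]
  \le \PE\Bigl[ \liminf_{n \to \infty} \ind\{ \xi(s_n) \le x \} \Bigr]
  \le \liminf_{n \to \infty} \PE\bigl[ \ind\{ \xi(s_n) \le x \} \bigr]
  = \liminf_{n \to \infty} F_{s_n}(x).
\]
Hypothesis~(a) now enters decisively: the absence of atoms gives $\Pr[\xi(s) < x] = F_s(x) - \Pr[\xi(s) = x] = F_s(x)$, so $F_s(x) \le \liminf_{n \to \infty} F_{s_n}(x)$, i.e.\ $s \mapsto F_s(x)$ is lower semicontinuous at $s$. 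As $s$ was arbitrary and hypothesis~(b) furnishes upper semicontinuity, $s \mapsto F_s(x)$ is continuous on $\domain$.

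\textbf{Main point.} The argument is short and I do not foresee a genuine obstacle; the subtlety worth underlining — and presumably the reason the lemma is recorded — is that both hypotheses are really needed and that neither the trajectory regularity nor hypothesis~(b) alone suffices. Dropping~(a), the Fatou step only delivers $F_s(x) - \Pr[\xi(s) = x] \le \liminf_{n} F_{s_n}(x)$, and at an atom the map may jump downwards; hypothesis~(b) is exactly what provides the reverse (upper) semicontinuity that upper semicontinuity of the paths fails to give.
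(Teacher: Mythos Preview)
Your argument is correct. The paper reaches the same conclusion by a less direct route: it invokes the right-continuous quantile functions $Q_s$, observing (via condition~(a) and Lemma~\ref{prop:quantile:right}(iii)) that $\{s : F_s(x) \le p\} = \{s : x \le Q_s(p)\}$, and then appeals to Lemma~\ref{lem:Qsp} (upper semicontinuity of $s \mapsto Q_s(p)$ for usc processes) to conclude that these sublevel sets are closed, hence $s \mapsto F_s(x)$ is lsc. Your Fatou-on-indicators argument short-circuits this detour through the quantile machinery; it is essentially the same idea unwound, since Lemma~\ref{lem:Qsp} itself rests on Lemma~\ref{lem:limsupX:limsupQp}, whose proof is a Fatou step of exactly the kind you wrote. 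The paper's route has the virtue of reusing results it needs anyway, while yours is self-contained and makes the role of each hypothesis more transparent.
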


\begin{proof}
By condition~(a), we have $\PP[\xi(s)<x] = F_s(x)$ for each fixed $(s,x)$,  so that by Lemma~\ref{prop:quantile:right}(iii),  we have
\[ 
  \{ s \in \domain : x \le Q_s(p) \} 
  =
  \{ s \in \domain: F_s(x ) \le p \}
\]
for every $x \in [-\infty, \infty]$ and $p \in [0, 1]$. Since $\xi$ is a usc process, the function $s\mapsto Q_s(p)$ is usc for $p\in[0,1]$ (Lemma~\ref{lem:Qsp}). The set in the display is thus closed. But then the map $s \mapsto F_s(x)$ is \emph{lower} semicontinuous, and thus, by (b), continuous.
\end{proof}

\begin{rem}\label{rem:smearing}
Condition~(a) in Proposition~\ref{prop:Sklar:II}, continuity of the marginal distributions, can perhaps be avoided using an additional randomization device, `smearing out' the probability masses of any atoms, as in \cite{ruschendorf:2009}. However, even if this may ensure uniform margins, it may destroy upper semicontinuity, see Example~\ref{ex:atom}. Since our interest is in extreme-value theory, in particular in max-stable distributions (Section~\ref{sec:max-stable}), which are continuous, we do not pursue this issue further.
\end{rem}

\begin{rem}
Without condition~(b) in Proposition~\ref{prop:Sklar:II}, the trajectories of the stochastic process $(Z(s) : s \in \domain)$ may not be usc: see Example~\ref{ex:what_if_b_fails}. However, condition~(b) is not necessary either: see Example~\ref{ex:b_not_necess}.
\end{rem}

The examples below illustrate certain aspects of Propositions~\ref{prop:Sklar:I} and~\ref{prop:Sklar:II}. In the examples, we do not go into measurability issues, that is, we do not prove that the mappings $\xi$ from the underlying probability space into $\uscD$ are hypo-measurable. To do so, one can for instance rely on \cite{molchanov2005theory}(Example~1.2 on page~3 and Theorem~2.25 on page~37).

\begin{ex}[What may happen without (a) in Proposition~\ref{prop:Sklar:II}]
\label{ex:atom}
Consider two independent, uniformly distributed variables $X$ and $Y$ on $[0,1]$. Take $\domain = [-1,1]$, and define
\[
  \xi(s)
  =
  \begin{cases}
    \abs{s} X & \quad \text{ if $-1 \le s < 0$,} \\
    0 & \quad \text{ if $s = 0$,} \\
    s Y & \quad \text{ if $0 < s \le 1$.}
  \end{cases}
\]
Condition~(b) in Proposition~\ref{prop:Sklar:II} is fulfilled: the trajectories $s \mapsto F_s(x) = \Pr[ \xi(s) \le x ]$ are continuous for every $x \ne 0$ and still usc for $x = 0$. The marginal probability integral transform produces $Z(s) = X$ for $s < 0$ and $Z(s) = Y$ for $s > 0$, while $Z(0) = F_0(0) = 1$. The process $Z$ has usc trajectories but $Z(0)$ is not uniformly distributed. If one wanted to modify the definition of $Z$ at $0$ so that the modified version, $\tilde{Z}$, would still have usc trajectories and such that $\tilde{Z}(0)$ would be uniformly distributed, then one would need to have $\tilde{Z}(0) \ge X \vee Y$, contradicting the uniformity assumption. We conclude that it is impossible to write $\xi(s)$ as $Q_s( \tilde{Z}(s) )$, where $\tilde{Z}$ is a usc process with standard uniform margins.
\end{ex}

\begin{ex}[What may happen without (b) in Proposition~\ref{prop:Sklar:II}]
\label{ex:what_if_b_fails}
Consider two independent, uniformly distributed variables $X$ and $Y$ on $[0,1]$. On $\domain = [0,2]$, define 
\[
  \xi(s) 
  = 
  X \vee (Y \ind_{\{1\}}(s))
  =
  \begin{cases}
    X & \quad \text{ if $s \neq  1$,} \\
    X\vee Y& \quad \text{ if $s = 1$.}
  \end{cases}
\]
Then $\xi$ is a usc process and the distribution function, $F_s$, of $\xi(s)$ is given by
\begin{equation*}
  F_s(x) = \Pr[ \xi(s) \le x ] =
  \begin{cases}
    x & \quad \text{ if $s \neq 1$,} \\
    x^2& \quad \text{ if $s = 1$,}
  \end{cases}
\end{equation*}
where $x \in [0, 1]$. As a consequence, the function $s\mapsto F_s(x)$ is lsc but not usc if $0 < x < 1$, so that condition~(b) in Proposition~\ref{prop:Sklar:II} is violated. Reducing to standard uniform margins yields
\begin{equation*}
   Z(s) = F_s(\xi(s)) =
   \begin{cases}
     X & \quad \text{ if $s \neq 1$,} \\
     (X\vee Y)^2 & \quad \text{ if $s = 1$.} \\
   \end{cases}
 \end{equation*}
The event $\{0 < Y^2 < X < 1\}$ has positive probability, and on this event we have $(X\vee Y)^2 < X$, so that the trajectory $s \mapsto Z(s)$ is not usc. Hence, statement~(i) in Proposition~\ref{prop:Sklar:II} fails.
\end{ex}

Some additional examples are postponed to Appendix~\ref{ap:examplesSklar2}, which show that Condition~(b) in Proposition~\ref{prop:Sklar:II} is not necessary, and that the usc processes $\xi$ and $\tilde{\xi}$ in Proposition~\ref{prop:Sklar:II} may be different in law as random elements of $\uscD$.

\section{Max-stable processes}
\label{sec:max-stable}

We apply Propositions~\ref{prop:Sklar:I} and~\ref{prop:Sklar:II} to \emph{max-stable} usc processes. These processes and their standardized variants are introduced in Subsection~\ref{subsec:max-stable}, after some preliminaries on generalized extreme-value distributions in Subsection~\ref{subsec:gev}. The main results are given in Subsection~\ref{subsec:max-stable:sklar}, followed by some examples in Subsection~\ref{subsec:max-stable:examples}.

\subsection{Generalized extreme-value distributions}
\label{subsec:gev}

The distribution function of the generalized extreme-value (GEV) distribution with parameter vector $\theta = (\gamma, \mu, \sigma) \in \Theta = \rset \times \rset \times (0, \infty)$ is given by
\begin{equation}
  \label{eq:GEV-distrib}
  F(x; \theta) =
  \begin{cases}
    \exp [ - \{ 1 + \gamma (x-\mu)/\sigma \}^{-1/\gamma} ] & \text{if $\gamma \neq 0$ and $\sigma + \gamma(x - \mu) > 0$,} \\
    \exp [ - \exp \{ - (x-\mu)/\sigma \} ] & \text{if $\gamma = 0$ and $x \in \rset$.}
  \end{cases}  
\end{equation}
The corresponding quantile function is
\begin{equation}
\label{eq:gev:Q}
  Q(p; \theta) =
  \begin{cases}
    \mu + \sigma [\{-1/\log(p)\}^\gamma - 1] / \gamma & \text{if $\gamma \neq 0$,} \\
    \mu + \sigma \log \{ - 1/\log(p) \} & \text{if $\gamma = 0$,}
  \end{cases}
\end{equation}
for $0 < p < 1$. The support is equal to the interval $\{ x \in \rset : \sigma + \gamma(x - \mu) > 0 \}$. In particular, the lower endpoint is equal to $Q(0; \theta) = -\infty$ if $\gamma \le 0$ and $Q(0; \theta) = \mu - \sigma/\gamma$ if $\gamma > 0$.

For every $n$, there exist unique scalars $a_{n,\theta} \in (0, \infty)$ and $b_{n,\theta} \in \rset$ such that the following max-stability relation holds:
\begin{equation}
\label{eq:F_max-stable}
  F^n(a_{n,\theta} x + b_{n,\theta}; \theta) = F(x; \theta), \qquad x \in \rset.
\end{equation}
In fact, a non-degenerate distribution is max-stable if and only if it is GEV. This property motivates the use of such distributions for modeling maxima over many variables. The location and scale sequences are given by
\begin{align}
\label{eq:anbn}
  a_{n,\theta} &= n^\gamma, &
  b_{n,\theta} &= 
  \begin{cases}
  (\sigma - \gamma \mu) (n^\gamma - 1)/\gamma &\text{if $\gamma \ne 0$,} \\
  \sigma \, \log n &\text{if $\gamma = 0$.}
  \end{cases}
\end{align}
For quantile functions, max-stability means that
\begin{equation}
\label{eq:Q:max_stable}
  Q(p^{1/n}; \theta) 
  = a_{n,\theta} \, Q(p; \theta) + b_{n,\theta}, 
  \qquad p \in [0, 1].
\end{equation}
Note that $Q(e^{-1}; \theta) = \mu$ and thus $Q(e^{-1/n}; \theta) = a_n \, \mu + b_n$.

In Sklar's theorem, the uniform distribution on $[0, 1]$ plays the role of pivot. Here, it is natural to standardize to a member of the GEV family. Multiple choices are possible. We opt for the unit-Fr\'{e}chet distribution, given by
\begin{equation}
\label{eq:unitFrechet}
  \Phi(x) = F(x; 1, 1, 1) =
  \begin{cases}
    0 & \text{if $x \le 0$}, \\
    e^{-1/x} & \text{if $x > 0$.}
  \end{cases}
\end{equation}
The unit-Fr\'{e}chet quantile function is given by $Q(p; 1, 1, 1) = -1/\log(p)$ for $0 < p < 1$. If the law of $X$ is unit-Fr\'{e}chet, then the law of $Q(\Phi(X); \theta)$ is $\GEV(\theta)$.

Consider now a stochastic process $\{\xi(s), \, s\in\domain\}$, such that $\xi(s)$ follows a GEV distribution with parameter $\theta$ depending on $s$, so that $\PP[\xi(s)\le x] = F(x; \theta(s))$, with $F$ as in~\eqref{eq:GEV-distrib}. In view of the conditions in Propositions~\ref{prop:Sklar:I} and~\ref{prop:Sklar:II}, the following lemma is relevant.

\begin{lem}
\label{lem:theta_cnt}
Let $\theta : \domain \to \Theta$. The functions $s \mapsto F(x; \theta(s))$ and $s \mapsto Q(p; \theta(s))$ are usc for every $x \in \rset$ and $p \in [0, 1]$ if and only if $\theta$ is continuous.
\end{lem}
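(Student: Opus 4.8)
The plan is to prove the two implications separately, and in each case to reduce the question about the joint map $s \mapsto (F(x;\theta(s)), Q(p;\theta(s)))$ to a question about the three coordinate functions $\gamma(s), \mu(s), \sigma(s)$ of $\theta$, exploiting the explicit formulas \eqref{eq:GEV-distrib}, \eqref{eq:gev:Q} and the joint continuity of $(x,\theta)\mapsto F(x;\theta)$ and $(p,\theta)\mapsto Q(p;\theta)$ on the relevant domains.

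\textbf{Sufficiency.} Suppose $\theta$ is continuous. Fix $x \in \rset$. The map $\theta \mapsto F(x;\theta)$ is continuous on $\Theta$: on the open region where $\gamma \ne 0$ and $\sigma + \gamma(x-\mu) > 0$ this is clear from the formula, and on the complementary region $F(x;\theta) = 0$ whenever $\gamma \ne 0$; one checks the boundary and the $\gamma = 0$ slice by taking limits in \eqref{eq:GEV-distrib}, so that $\theta \mapsto F(x;\theta)$ is in fact continuous (not merely usc) on all of $\Theta$. Composing with the continuous map $\theta$, the function $s \mapsto F(x;\theta(s))$ is continuous, hence usc. For the quantile function, fix $p \in (0,1)$; then $(x\vee 0)\wedge 1 = p$ is interior and $\theta \mapsto Q(p;\theta)$ is continuous on $\Theta$ by \eqref{eq:gev:Q}, so $s \mapsto Q(p;\theta(s))$ is continuous, hence usc. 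The boundary cases $p = 0$ and $p = 1$ are handled separately: $Q(1;\theta) = +\infty$ is constant, and $Q(0;\theta) = -\infty$ if $\gamma \le 0$ while $Q(0;\theta) = \mu - \sigma/\gamma$ if $\gamma > 0$, so $s \mapsto Q(0;\theta(s))$ takes the value $-\infty$ on the closed set $\{\gamma(s)\le 0\}$ and equals the continuous function $\mu - \sigma/\gamma$ on the open set $\{\gamma(s) > 0\}$; such a function is usc because its superlevel sets $\{Q(0;\theta(s)) \ge \alpha\}$ are closed (on $\{\gamma > 0\}$ the function is continuous, and the value $-\infty$ cannot create a non-closed superlevel set).

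\textbf{Necessity.} Conversely, suppose $s \mapsto F(x;\theta(s))$ is usc for every $x \in \rset$ and $s \mapsto Q(p;\theta(s))$ is usc for every $p \in [0,1]$. We must recover continuity of the three parameter functions. The idea is that the quantile function $Q(\,\cdot\,;\theta)$, as an element of the (one-dimensional) space of usc functions — or even just three well-chosen values of it — determines $\theta$ continuously. Concretely: from \eqref{eq:Q:max_stable}, $Q(e^{-1};\theta) = \mu$, so $s \mapsto \mu(s) = Q(e^{-1};\theta(s))$ is usc. Applying the hypothesis also to $x \mapsto -\,(\text{something})$ is not directly available, so instead I would use the distribution functions to get the reverse inequality: since $F(x;\theta)$ is continuous in $\theta$ with a continuous inverse in the interior of its support, usc of $s \mapsto F(x;\theta(s))$ for all $x$ forces, by a translation/monotonicity argument, usc of $s \mapsto -\mu(s)$ as well, hence continuity of $\mu$. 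A clean way to see the reverse inequality for all three parameters at once: pick three probability levels $0 < p_1 < p_2 < p_3 < 1$; the map $\Theta \to \rset^3$, $\theta \mapsto (Q(p_1;\theta), Q(p_2;\theta), Q(p_3;\theta))$, is a homeomorphism onto its (closed) image, because \eqref{eq:gev:Q} can be inverted smoothly (two differences recover $\gamma$ and $\sigma$ from a ratio, and then $\mu$). Usc of each $s \mapsto Q(p_i;\theta(s))$ gives usc of each coordinate; usc of $s \mapsto F(x;\theta(s))$ for all $x$, rewritten via $\{s : Q(p;\theta(s)) \ge x\} = \{s : F(x;\theta(s)) \le p\}$ (valid since the GEV law has a continuous, strictly increasing distribution function on the interior of its support, cf.\ the proof of Lemma~\ref{lem:equiv_usc_continuous_cdf}), shows the superlevel sets $\{s : Q(p_i;\theta(s)) \ge x\}$ are also open, i.e.\ $s \mapsto Q(p_i;\theta(s))$ is lsc, hence continuous. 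Therefore $s \mapsto (Q(p_1;\theta(s)), Q(p_2;\theta(s)), Q(p_3;\theta(s)))$ is continuous, and composing with the inverse homeomorphism yields continuity of $\theta$.

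\textbf{Main obstacle.} The routine direction is sufficiency; the delicate part is necessity, specifically establishing the \emph{lower} semicontinuity needed to upgrade usc to continuity. The hypothesis only gives usc of the $F$- and $Q$-sections, and the standard device for turning "usc of a family of level functions" into "lsc of a conjugate family" is exactly the set identity $\{s : x \le Q_s(p)\} = \{s : F_s(x) \le p\}$ used in Lemma~\ref{lem:equiv_usc_continuous_cdf}. Making sure this identity holds for the GEV law (it does, because GEV distributions are atomless and strictly increasing on the interior of their support, so $Q(F(x;\theta);\theta) = x$ there) and handling the degenerate levels $p \in \{0,1\}$ and the boundary of the support are the points that need care; everything else is bookkeeping with the explicit formulas \eqref{eq:GEV-distrib}, \eqref{eq:gev:Q}, \eqref{eq:anbn}.
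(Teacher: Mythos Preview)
Your approach is essentially the paper's, run in mirror image. Both directions hinge on the set identity $\{s : x \le Q(p;\theta(s))\} = \{s : F(x;\theta(s)) \le p\}$ (Lemma~\ref{prop:quantile:right}(iii), using that GEV laws are atomless) to upgrade usc to continuity, and then on identifiability of the GEV parameter. The paper argues ``$Q$ usc $\Rightarrow$ $\{Q \ge x\}$ closed $\Rightarrow$ $\{F \le p\}$ closed $\Rightarrow$ $F$ lsc, hence $F$ continuous, hence $\theta$ continuous by Lemma~\ref{lem:continuityGEVparameter}''; you instead argue ``$F$ usc $\Rightarrow$ $Q$ lsc, hence $Q(p_i;\theta(\cdot))$ continuous at three levels $p_i$, hence $\theta$ continuous via the three-quantile homeomorphism''. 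The two concluding identifiability steps are the same content in different packaging.

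There is, however, a slip in your inequalities that you should fix. Upper semicontinuity of $s \mapsto F(x;\theta(s))$ gives that $\{s : F(x;\theta(s)) < p\}$ is open, \emph{not} that $\{s : F(x;\theta(s)) \le p\}$ is open; and lower semicontinuity of $s \mapsto Q(p;\theta(s))$ means the \emph{strict} superlevel sets $\{s : Q(p;\theta(s)) > x\}$ are open, not the weak ones. (As written, your claim would make $\{Q \ge x\}$ simultaneously closed and open, which on a connected $\domain$ would force $Q(p;\theta(\cdot))$ constant.) What you actually need is the strict-inequality companion identity
\[
  \{s : Q(p;\theta(s)) > x\} = \{s : F(x;\theta(s)) < p\}, \qquad p \in (0,1),\ x \in \rset,
\]
which holds because each GEV distribution function is continuous and strictly increasing on its support, so $F(x;\theta) < p \iff x < Q(p;\theta)$ for $p \in (0,1)$. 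With this correction your dual argument goes through. Alternatively, simply run the paper's direction: the non-strict identity together with usc of $Q$ already gives $\{F_s(x) \le p\}$ closed, hence $F$ lsc, hence continuous, and then invoke identifiability of the GEV parameter.
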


\begin{proof}
If $\theta$ is continuous, then the maps $s \mapsto F(x; \theta(s))$ and $s \mapsto Q(p; \theta(s))$ are continuous and thus usc.

Conversely, suppose that the functions $s \mapsto F(x; \theta(s))$ and
$s \mapsto Q(p; \theta(s))$ are usc for every $x \in
\rset$ and $p \in [0, 1]$. The argument is similar to the proof of
Lemma~\ref{lem:equiv_usc_continuous_cdf}. By Lemma~\ref{prop:quantile:right}(iii), we have 
\[ 
  \{ s \in \domain : x \le Q(p; \theta(s)) \} 
  =
  \{ s \in \domain: F(x; \theta(s)) \le p \}
\]
for every $x \in [-\infty, \infty]$ and $p \in [0, 1]$; note that GEV distribution functions are continuous functions of $x$. Since the map $s \mapsto Q(p; \theta(s))$ is usc, the above sets are closed. But then the map $s \mapsto F(x; \theta(s))$ must be lsc, and thus continuous.
Finally, the map sending a GEV distribution to its parameter is continuous with respect to the weak topology (see Lemma~\ref{lem:continuityGEVparameter}). Hence, $\theta$ is continuous.
\end{proof}

\begin{rem}
\label{rem:theta_cnt}
If $\theta : \domain \to \Theta$ is continuous, then the normalizing functions $s \mapsto a_{n,\theta(s)}$ and $s \mapsto b_{n,\theta(s)}$ are continuous as well. Consider the map $U_n(s, x) = \{ x - b_{n,\theta(s)} \} / a_{n,\theta(s)}$, for $(s, x) \in \domain \times [-\infty, \infty]$. Clearly, $U_n \in \mathcal{U}(\domain)$. By Proposition~\ref{prop:transfo}, the transformation $\uscD \to \uscD : z \mapsto (z - b_{n,\theta}) / a_{n,\theta}$ is then well-defined and hypo-measurable.
\end{rem}

\subsection{Max-stable usc processes}
\label{subsec:max-stable}

Max-stable usc processes have been defined in the literature \cite{molchanov2005theory,vervaat:1986,vervaat1988random} as usc processes whose distribution is invariant under the componentwise maximum operation, up to an affine rescaling involving \emph{constants} which are not allowed to depend upon the index variable $s \in \domain$. Likewise, the processes in \cite{dehaan:1984} and \cite{resnick1991random} and have marginal distributions which are Fr\'{e}chet with shape parameter $\gamma \equiv 1$ and lower endpoint $\mu - \gamma/\sigma \equiv 0$, so that the scaling sequences are $a_n \equiv n$ and $b_n \equiv 0$. Moreover, max-stability is defined in \cite{dehaan:1984} in terms of finite-dimensional distributions only. Definition~\ref{def:max-stable} below extends these previous approaches by allowing for an index-dependent rescaling, through scaling \emph{functions} $a_n$ and $b_n$, and by viewing the random objects as random elements in $\uscD$.

\begin{defn}
\label{def:max-stable}
A usc process $\xi$ is \emph{max-stable} if, for all integer $n\ge 1$, there exist functions $a_n : \domain \to (0, \infty)$ and $b_n : \domain \to \rset$ such that, for each vector of $n$ independent and identically distributed (iid) usc processes $\xi_1,\ldots,\xi_n$ with the same law as $\xi$, we have
\begin{equation}
  \label{eq:xi_max-stable}
  \textstyle \bigvee_{i=1}^n \xi_i \eqd a_n \xi + b_n \quad\text{ in $\uscD$}. 
\end{equation}
A max-stable usc process $\xi^*$ is said to be \emph{simple} if, in addition, its marginal distributions are unit-Fr\'{e}chet as in \eqref{eq:unitFrechet}. 
In that case, the norming functions are given by $a_n(s) = n$ and $b_n(s) = 0$ for all $n \ge 1$ and $s \in \domain$, i.e., for iid usc processes $\xi_1^*, \ldots, \xi_n^*$ with the same law as $\xi^*$, we have
\begin{equation}
  \label{eq:defSimpleMStable}
  \textstyle \bigvee_{i=1}^n \xi^*_i \eqd n \xi^* \quad\text{ in $\uscD$}.
\end{equation}
\end{defn}

In \eqref{eq:xi_max-stable} and \eqref{eq:defSimpleMStable}, the meaning is that the induced probability distributions on the space $\uscD$ equipped with the sigma-field of hypo-measurable sets are equal.
In Definition~\ref{def:max-stable}, it is implicitly understood that the functions $a_n$ and $b_n$ are such that the right-hand side of \eqref{eq:xi_max-stable} still defines a usc process. If $a_n$ is continuous and $b_n$ is usc, then this is automatically the case; see Lemma~\ref{lem:compo} and Example~\ref{ex:transfo:simple}(iii).

Equation~\eqref{eq:xi_max-stable} is not necessarily the same as saying that $( \bigvee_{i=1}^n \xi_i - b_{n} )  / a_{n}$ is equal in distribution to $\xi$. The reason is that it is not clear that $(\bigvee_{i=1}^n \xi_i - b_{n})/a_{n}$ is a usc process. Whether this is the case or not remains an open question.

The evaluation map $\uscD \to [-\infty, \infty] : z \mapsto z(s)$ is hypo-measurable for all $s \in \domain$. Equation~\eqref{eq:xi_max-stable} then implies the following distributional equality between random variables:
\[
  \textstyle
  \bigvee_{i=1}^n \xi_i(s) \eqd a_n(s) \, \xi(s) + b_n(s), \qquad s \in \domain. 
\]
As a consequence, the marginal distribution of $\xi(s)$ is max-stable and therefore GEV with some parameter vector $\theta(s) \in \Theta$. The normalizing functions $a_n$ and $b_n$ must then be of the form $a_n(s) = a_{n,\theta(s)}$ and $b_n(s) = b_{n,\theta(s)}$ as in~\eqref{eq:anbn}. If $\theta : \domain \to \Theta$ is continuous, then the normalizing functions are continuous too, and, by Remark~\ref{rem:theta_cnt}, the usc process $(\bigvee_{i=1}^n \xi_i - b_n) / a_n$ is well-defined and equal in distribution to $\xi$.

\subsection{Sklar's theorem for max-stable usc processes}
\label{subsec:max-stable:sklar}

We investigate the relation between general and simple max-stable usc processes via the pointwise probability integral transform and its inverse. Max-stability of usc processes is defined in \eqref{eq:xi_max-stable} via an equality of distributions on $\uscD$ rather than of finite-dimensional distributions. It is therefore not clear from the outset that max-stability is preserved by pointwise transformations.

Proposition~\ref{prop:Sklar:I:max-stable} gives a necessary and sufficient condition on the GEV margins to be able to construct a general max-stable usc process starting from a simple one. Propositions~\ref{prop:Sklar:II:GEV} and~\ref{prop:Sklar:II:max-stable} treat the converse question, that is, when can a max-stable usc process be first reduced to a simple one and then be reconstructed from it.

\begin{prop}[\`{a} la Sklar~I for max-stable usc processes]
\label{prop:Sklar:I:max-stable}
Let $\xi^*$ be a simple max-stable usc process. Let $\theta : \domain \to \Theta$. Define a stochastic process $\xi$ by $\xi(s) = Q( \Phi(\xi^*(s)); \theta(s) )$ for $s \in \domain$. Then the following two statements are equivalent:
\begin{enumerate}[(i)]
\item
  $\xi$ is a usc process with marginal distributions $\GEV(\theta(s))$.
\item
  For every $p \in [0, 1]$, the function $s \mapsto Q(p; \theta(s))$ is usc.
\end{enumerate}
If these conditions hold, then $\xi$ is a max-stable usc process with normalizing functions $a_n(s) = a_{n,\theta(s)}$ and $b_n(s) =
b_{n,\theta(s)}$.
\end{prop}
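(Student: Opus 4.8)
The plan is to split the proof into two parts. First, the equivalence of (i) and (ii) follows essentially immediately from Proposition~\ref{prop:Sklar:I}. Indeed, set $Z = \xi^*$, which is a usc process with standard uniform margins after composing with $\Phi$: more precisely, the process $\Phi(\xi^*)$ has standard uniform margins (since $\xi^*$ has unit-Fr\'echet margins) and is a usc process by Proposition~\ref{prop:transfo}, because $\Phi$ is non-decreasing and right-continuous, hence the function $(s,x)\mapsto\Phi(x)$ lies in $\mathcal{U}(\domain)$ by Example~\ref{ex:transfo:simple}(i). Writing $\tilde F_s = F(\,\cdot\,;\theta(s))$ with right-continuous quantile function $\tilde Q_s = Q(\,\cdot\,;\theta(s))$, and noting that $\xi(s) = \tilde Q_s(\Phi(\xi^*(s))) = \tilde Q_s((\Phi(\xi^*(s))\vee 0)\wedge 1)$ since $\Phi(\xi^*(s))\in[0,1]$ already, the process $\xi$ is exactly the one built in Proposition~\ref{prop:Sklar:I} applied to $Z=\Phi(\xi^*)$ and the family $(\tilde F_s)$. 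Hence (i) $\Leftrightarrow$ (ii) is that proposition verbatim.

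Second, assuming (i) and (ii) hold, I must show $\xi$ is max-stable with the claimed normalizing functions. The key point is that $\xi = U_*(\xi^*)$ where $U(s,x) = Q(\Phi(x);\theta(s))$ belongs to $\mathcal{U}(\domain)$: condition (a) of Definition~\ref{def:UD} holds since both $\Phi$ and each $\tilde Q_s$ are non-decreasing and right-continuous, and condition (b) is precisely statement (ii) (together with Remark~\ref{rem:UD:usc} for $x=-\infty$). The map $U_*$ is hypo-measurable by Proposition~\ref{prop:transfo}(iv). Now let $\xi^*_1,\dots,\xi^*_n$ be iid copies of $\xi^*$ and let $\xi_i = U_*(\xi^*_i)$; these are iid copies of $\xi$. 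Since $U_*$ is monotone in the sense that it preserves the pointwise maximum — this is where I would invoke the identity $\bigvee_{i=1}^n U_*(\xi^*_i) = U_*\bigl(\bigvee_{i=1}^n \xi^*_i\bigr)$, valid because for each fixed $s$ the map $x\mapsto U(s,x)$ is non-decreasing, so $\max_i U(s,\xi^*_i(s)) = U(s,\max_i \xi^*_i(s))$ — we get
\[
  \textstyle\bigvee_{i=1}^n \xi_i = U_*\bigl(\bigvee_{i=1}^n \xi^*_i\bigr) \eqd U_*(n\xi^*) \quad\text{in }\uscD,
\]
using \eqref{eq:defSimpleMStable} and hypo-measurability of $U_*$ to transport the equality in distribution. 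So it remains to identify $U_*(n\xi^*)$ as $a_n\xi + b_n$. Pointwise, $(U_*(n\xi^*))(s) = Q(\Phi(n\xi^*(s));\theta(s))$; since $\xi^*(s)$ is unit-Fr\'echet, $\Phi(n\,\xi^*(s)) = \exp(-1/(n\,\xi^*(s))) = \Phi(\xi^*(s))^{1/n}$, whence by the quantile max-stability relation \eqref{eq:Q:max_stable}, $Q(\Phi(\xi^*(s))^{1/n};\theta(s)) = a_{n,\theta(s)}\,Q(\Phi(\xi^*(s));\theta(s)) + b_{n,\theta(s)} = a_n(s)\,\xi(s) + b_n(s)$. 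Thus $U_*(n\xi^*)$ and $a_n\xi + b_n$ agree as functions, in particular as elements of $\uscD$, giving $\bigvee_{i=1}^n\xi_i \eqd a_n\xi + b_n$ in $\uscD$, which is \eqref{eq:xi_max-stable}.

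One should check that $a_n\xi + b_n$ genuinely defines a usc process, so that \eqref{eq:xi_max-stable} is meaningful: this follows since $s\mapsto a_n(s) = a_{n,\theta(s)}$ is continuous and $s\mapsto b_n(s) = b_{n,\theta(s)}$ is usc (continuous, in fact, whenever $\theta$ is — but here we only know (ii), not full continuity of $\theta$), so by Example~\ref{ex:transfo:simple}(iii) and Lemma~\ref{lem:compo} the transformation $z\mapsto a_n z + b_n$ maps $\uscD$ to $\uscD$. Actually the cleanest route is to observe that we have just shown $a_n\xi+b_n$ equals the usc process $U_*(n\xi^*)$, so no separate verification is needed. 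The main obstacle, such as it is, is purely bookkeeping: making sure that the equality in distribution on $\uscD$ (as opposed to merely finite-dimensional agreement) is correctly propagated through $U_*$, which is exactly what the hypo-measurability in Proposition~\ref{prop:transfo}(iv) delivers, and correctly invoking the scalar max-stability identity \eqref{eq:Q:max_stable} to recognize the normalizing functions $a_{n,\theta(s)}, b_{n,\theta(s)}$.
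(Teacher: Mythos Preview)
Your proof is correct and follows essentially the same approach as the paper: reduce the equivalence (i)$\Leftrightarrow$(ii) to Proposition~\ref{prop:Sklar:I} via $Z=\Phi(\xi^*)$, then use that $U(s,x)=Q(\Phi(x);\theta(s))$ lies in $\mathcal{U}(\domain)$, push the pointwise maximum through $U_*$ by monotonicity, invoke simple max-stability of $\xi^*$, and identify $U_*(n\xi^*)=a_n\xi+b_n$ via the identity $\Phi(nx)=\Phi(x)^{1/n}$ together with \eqref{eq:Q:max_stable}. The paper is marginally more explicit about one point you leave implicit: Definition~\ref{def:max-stable} quantifies over \emph{every} iid vector $(\xi_1,\dots,\xi_n)$, whereas you construct the particular copies $\xi_i=U_*(\xi^*_i)$; the paper bridges this by noting $(\xi_1,\dots,\xi_n)\eqd(U_*(\xi^*_1),\dots,U_*(\xi^*_n))$ for arbitrary iid $\xi_i$, which is immediate since both sides have the $n$-fold product law of $\xi$.
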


\begin{proof}
By Proposition~\ref{prop:Sklar:II}(i) applied to $\xi^*$, the stochastic process $Z(s) = \Phi(\xi^*(s))$ induces a usc process whose margins are uniform on $[0, 1]$. The equivalence of statements (i) and (ii) then follows from Proposition~\ref{prop:Sklar:I}.

Assume that (i) and (ii) are fulfilled. We need to show that the right-hand side in \eqref{eq:xi_max-stable} defines a usc process and that the stated equality in distribution holds.

For positive integer $n$, define $U_n(s, x) = Q(\Phi(nx); \theta(s))$ for $(s, x) \in \domain \times [-\infty, \infty]$. In view of Lemma~\ref{lem:compo} and Example~\ref{ex:transfo:simple}, the map $U_n$ belongs to $\mathcal{U}(\domain)$. Moreover, max-stability \eqref{eq:Q:max_stable} implies
\[
  U_n(s, x) = Q( \Phi(x)^{1/n}; \theta(s) ) 
  = a_{n,\theta(s)} \, Q( \Phi(x); \theta(s) ) + b_{n,\theta(s)}.
\]
It follows that
\[
  a_{n,\theta(s)} \, \xi(s) + b_{n,\theta(s)}
  =
  U_n(s, \xi^*(s)).
\]
By Proposition~\ref{prop:transfo}, the function $s \mapsto U_n(s,
z(s))$ belongs to $\uscD$ for every $z \in \uscD$, and the map
$U_{n,*}$ from $\uscD$ to itself sending $z \in \uscD$ to this
function is hypo-measurable. We conclude that $a_n \, \xi + b_n$ is a usc process.

Next, we prove that $\xi$ is max-stable. Let $\xi_1,\ldots,\xi_n$ be iid usc processes with the same law as $\xi$. Further, let $\xi^*_1,\ldots,\xi^*_n$ be iid usc processes with the same law as $\xi^*$. For every $i \in \{1, \ldots, n\}$, we have
\[
  \xi_i \eqd \xi = U_{1,*}(\xi^*) \eqd  U_{1,*}(\xi^*_i).
\]
The last equality in distribution comes from the hypo-measurability of $U_{1,*}$. By independence, it follows that
\[
  (\xi_1,\ldots,\xi_n) 
  \eqd
  \left(U_{1,*}(\xi^*_1),\ldots,U_{1,*}(\xi^*_n) \right).
\]
Write $(\tilde\xi_1, \ldots, \tilde\xi_n) = (U_{1,*}(\xi^*_i),\ldots, U_{1,*}(\xi^*_n))$. By monotonicity, we have, for $s\in\domain$,
\[
  \textstyle
  \bigvee_{i=1}^n \tilde\xi_i(s) 
  = 
  \bigvee_{i=1}^n Q \bigl( \Phi(\xi^*_i(s)); \theta(s) \bigr)
  =
  Q \bigl( \Phi (\bigvee_{i=1}^n \xi^*_i(s)); \theta(s) \bigr).
\]
Since $\xi^*$ is a simple max-stable usc process, we have $\bigvee_{i=1}^n \xi^*_i \eqd n\xi^*$ in $\uscD$. But then also
\begin{align*}
  \textstyle\bigvee_{i=1}^n \xi_i 
  &\eqd \textstyle \bigvee_{i=1}^n \tilde\xi_i\\
  &\eqd
  Q \bigl( \Phi( n \xi^* ); \theta \bigr)
  =
  Q \bigl( \Phi( \xi^* )^{1/n}; \theta \bigr) \\
  &= 
  a_{n,\theta} \, Q \bigl( \Phi( \xi^* ); \theta \bigr) + b_{n,\theta}
  =
  a_{n,\theta} \, \xi + b_{n,\theta}.
\end{align*}
We conclude that $\xi$ is max-stable, as required.
\end{proof}

\begin{prop}[\`{a} la Sklar II for usc processes with GEV margins]
\label{prop:Sklar:II:GEV}
Let $\xi$ be a usc process with $\GEV(\theta(s))$ margins for $s \in \domain$. If $\theta : \domain \to \Theta$ is continuous, then the following statements hold:
\begin{enumerate}[(i)]
\item 
The process $\xi^*$ defined by $\xi^*(s) = -1/\log F(\xi(s); \theta(s))$ is a usc process with unit-Fr\'{e}chet margins.
\item
The process $\tilde{\xi}$ defined by $\tilde{\xi}(s) = Q(\Phi(\xi^*(s)); \theta(s))$ is a usc process and, with probability one,
\[
  \forall s \in \domain:
  \tilde{\xi}(s)
  =
  \begin{cases}
    \xi(s) & \text{if $\xi^*(s) < \infty$,} \\
    \infty & \text{if $\xi^*(s) = \infty$.}
  \end{cases}
\]
\end{enumerate}
\end{prop}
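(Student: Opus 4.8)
The plan is to follow the proof of Proposition~\ref{prop:Sklar:II} almost verbatim, using the unit-Fr\'echet law as pivot instead of the uniform one, and then to deal separately with the one genuinely new ingredient: an almost-sure statement ranging over the uncountable index set $\domain$. For part~(i) I would first check that Proposition~\ref{prop:Sklar:II} is applicable to $\xi$. Its condition~(a) holds because every GEV law is atomless (the distribution function~\eqref{eq:GEV-distrib} is continuous in~$x$); its condition~(b), upper semicontinuity of $s\mapsto F(x;\theta(s))$ for $x\in\rset\cup\{+\infty\}$, follows from Lemma~\ref{lem:theta_cnt} (since $\theta$ is continuous) together with the trivial case $F(+\infty;\theta(s))\equiv1$. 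Proposition~\ref{prop:Sklar:II}(i) then gives that $Z(s)=F(\xi(s);\theta(s))$ is a usc process with standard uniform margins. The unit-Fr\'echet quantile map $p\mapsto-1/\log p$ on $[0,1]$ (with the conventions $-1/\log0=0$, $-1/\log1=+\infty$), extended non-decreasingly and right-continuously to $[-\infty,\infty]$, falls under Example~\ref{ex:transfo:simple}(i), so $\xi^*(s)=-1/\log Z(s)$ defines a usc process; a one-line probability integral transform computation, $\PP[-1/\log Z(s)\le x]=\PP[Z(s)\le e^{-1/x}]=e^{-1/x}$ for $x>0$, shows its margins are unit-Fr\'echet as in~\eqref{eq:unitFrechet}.

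For part~(ii), write $Q_s$ for the right-continuous quantile function of $\GEV(\theta(s))$, so that $Q_s(p)=Q(p;\theta(s))$ as in~\eqref{eq:gev:Q} for $0<p<1$, $Q_s(0)=\ell(s)$ is the lower endpoint of the support and $Q_s(1)=+\infty$; then $\tilde\xi(s)=Q_s(\Phi(\xi^*(s)))$. That $\tilde\xi$ is a usc process follows from Proposition~\ref{prop:Sklar:I}: the process $\Phi\circ\xi^*$ is usc (Example~\ref{ex:transfo:simple}(i), $\Phi$ being a continuous distribution function) with standard uniform margins (probability integral transform applied to the continuous law $\Phi$ of $\xi^*(s)$), and $s\mapsto Q_s(p)$ is usc for every $p\in[0,1]$ by Lemma~\ref{lem:theta_cnt}, so condition~(ii) of that proposition holds and $\tilde\xi$ is a usc process with $\GEV(\theta(s))$ margins.

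It remains to prove the pathwise identity, and here everything is deterministic except for one point. First, $\Phi(\xi^*(s))=F(\xi(s);\theta(s))$ identically --- one checks the three cases $F(\xi(s);\theta(s))\in\{0\}$, $(0,1)$, $\{1\}$ --- so $\tilde\xi(s)=Q_s(F(\xi(s);\theta(s)))$. Next, because the GEV law is atomless, Lemma~\ref{prop:quantile:right}(iii) gives $x\le Q_s(p)\iff F(x;\theta(s))\le p$, whence $Q_s(F(x;\theta(s)))=x$ whenever $\ell(s)\le x<u(s)$ (with $u(s)$ the upper endpoint), while $Q_s(F(x;\theta(s)))=Q_s(1)=+\infty$ whenever $x\ge u(s)$; moreover $\xi^*(s)<\infty$ is exactly $F(\xi(s);\theta(s))<1$, i.e.\ $\xi(s)<u(s)$. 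Consequently, on the event $\{\xi(s)\ge\ell(s)\}$ one has $\tilde\xi(s)=\xi(s)$ when $\xi^*(s)<\infty$ and $\tilde\xi(s)=+\infty$ when $\xi^*(s)=\infty$, which is precisely the assertion at $s$. The whole statement therefore reduces to
\[
  \PP\bigl[\,\forall s\in\domain:\ \xi(s)\ge\ell(s)\,\bigr]=1 ,
\]
and this is the main obstacle: it is an almost-sure statement indexed by the uncountable set $\domain$, of a type not governed by the finite-dimensional distributions.

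To establish it, I would write $\{\exists s:\xi(s)<\ell(s)\}=\bigcup_{q\in\qset}\{\exists s\in L_q:\xi(s)<q\}$ with $L_q=\{s\in\domain:\ell(s)>q\}$. The lower-endpoint map $(\gamma,\mu,\sigma)\mapsto\mu-\sigma/\gamma$ for $\gamma>0$ and $-\infty$ for $\gamma\le0$ is lower semicontinuous on $\Theta$, so $\ell$ (its composition with the continuous $\theta$) is lsc and each $L_q$ is open. For fixed $\omega$ the set $\{s:\xi(s,\omega)<q\}$ is open because $\xi(\point,\omega)$ is usc, so $\{s\in L_q:\xi(s,\omega)<q\}$ is an open subset of the separable space $L_q$, hence non-empty if and only if it meets a fixed countable dense set $D_q\subseteq L_q$; therefore
\[
  \PP\bigl[\exists s\in L_q:\xi(s)<q\bigr]\le\sum_{s\in D_q}\PP[\xi(s)<q]=0 ,
\]
the last equality because for $s\in D_q$ the law $\GEV(\theta(s))$ has support with lower endpoint $\ell(s)>q$. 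Summing over $q\in\qset$ finishes the proof. This reduction of the uncountable union to a countable one, using density together with upper semicontinuity of the trajectories, is the crux; the remainder is routine bookkeeping with Proposition~\ref{prop:transfo}, Lemma~\ref{lem:compo} and Example~\ref{ex:transfo:simple}.
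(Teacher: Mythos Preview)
Your argument is correct and tracks the paper's proof closely for part~(i) and for the fact that $\tilde\xi$ is a usc process; the only substantive divergence is in how you establish $\PP[\forall s\in\domain:\xi(s)\ge\ell(s)]=1$. The paper delegates this to Lemma~\ref{lem:lower_bound}, a general statement valid for any usc process: there $\ell(s)=Q_s(0)$ is shown to be \emph{usc} via Lemma~\ref{lem:Qsp}, and then Lemma~\ref{lem:z_separable} (separability of $\hypo\ell$) produces a countable set $\mathbb Q_\ell$ on which the inequality $\xi\ge\ell$ can be checked pointwise. You instead exploit the continuity of $\theta$ to observe that the GEV lower endpoint $\ell$ is \emph{lsc} (indeed continuous here), so that the sets $L_q=\{\ell>q\}$ are open, and then run a density argument inside each $L_q$. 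Both proofs hinge on reducing the uncountable ``$\forall s$'' to a countable one via separability; the paper's route is packaged as a reusable lemma for arbitrary usc processes, while yours is a clean self-contained argument tailored to the present setting. A minor organizational difference: for the usc property of $\tilde\xi$ you invoke Proposition~\ref{prop:Sklar:I} directly via Lemma~\ref{lem:theta_cnt}, whereas the paper goes through Proposition~\ref{prop:Sklar:II}(ii), which in turn unfolds to the same ingredients.
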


\begin{proof}
The marginal distributions of $\xi$ are GEV and depend continuously on $s$. Conditions~(a) and~(b) in Proposition~\ref{prop:Sklar:II} are therefore satisfied.

By Proposition~\ref{prop:Sklar:II}(i), the process $Z$ defined by $Z(s) = F(\xi(s); \theta(s))$ is a usc process with standard uniform margins. It then follows that $\xi^*$ defined by $\xi^*(s) = - 1/\log Z(s)$ is a usc process too, its margins being unit-Fr\'{e}chet.

Since $0 \le Z(s) \le 1$ by construction, we have $0 \le \xi^*(s) \le \infty$ and thus $\Phi(\xi^*(s)) = Z(s)$. We find $\tilde{\xi}(s) = Q(Z(s); \theta(s)) = Q(F(\xi(s); \theta(s)); \theta(s))$. By Proposition~\ref{prop:Sklar:II}(ii), the process $\tilde{\xi}$ is a usc process.

Recall that GEV distribution functions are continuous and strictly increasing on their domains. As a consequence, for all $x$ such that $x \ge Q(0; \theta(s))$, we have 
\[
  Q(F(x; \theta(s)); \theta(s)) 
  = 
  \begin{cases}
    x & \text{if $F(x; \theta(s)) < 1$,} \\
    \infty & \text{if $F(x; \theta(s)) = 1$.}
  \end{cases}
\]
Moreover, Lemma~\ref{lem:lower_bound} implies that $\xi \ge Q(0; \theta)$ almost surely. Since $\xi^*(s) < \infty$ if and only if $F(\xi(s); \theta(s)) < 1$, we arrive at the stated formula for $\tilde{\xi}$.
\end{proof}

\begin{prop}[\`{a} la Sklar II for max-stable processes]
\label{prop:Sklar:II:max-stable}
Let $\xi$ be a usc process with $\GEV(\theta(s))$ margins for $s \in \domain$. 
Assume that for every compact $K \subset \domain$, we have $\sup_{s \in K} F(\xi(s); \theta(s)) < 1$ with probability one.
As in Proposition~\ref{prop:Sklar:II:GEV}, define two usc processes $\xi^*$ and $\tilde \xi$ by $\xi^*(s) = -1/\log F(\xi(s); \theta(s))$ and $\tilde \xi(s) = Q( \Phi(\xi^*(s)); \theta(s))$, for $s\in\domain$. Then, almost surely, $\xi = \tilde\xi$. Furthermore, the following two statements are equivalent:
\begin{compactenum}[(i)]
\item The usc process $\xi$ is max-stable. 
\item The usc process $\xi^*$ is simple max-stable. 
\end{compactenum}
\end{prop}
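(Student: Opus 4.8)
The plan is to first establish the almost-sure equality $\xi = \tilde\xi$, and then prove the equivalence (i)$\iff$(ii) by transporting max-stability through the pointwise transformations $U_{1,*}$ (sending $z \mapsto Q(\Phi(z(\cdot));\theta(\cdot))$) and its ``inverse'' $V_{1,*}$ (sending $z \mapsto -1/\log F(z(\cdot);\theta(\cdot))$), both of which are hypo-measurable maps $\uscD \to \uscD$ by Proposition~\ref{prop:transfo}(iv). Note that the hypothesis ``$\sup_{s\in K} F(\xi(s);\theta(s)) < 1$ a.s.\ for every compact $K$'' is exactly what is needed to feed into Proposition~\ref{prop:Sklar:II:GEV}: it forces $\xi^*(s) < \infty$ for all $s$ with probability one (take a countable exhausting sequence of compacts), so the formula in Proposition~\ref{prop:Sklar:II:GEV}(ii) collapses to $\tilde\xi(s) = \xi(s)$ for all $s$, almost surely. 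Thus $\xi \eqd \tilde\xi = U_{1,*}(\xi^*)$ in $\uscD$. Also, since $\theta$ is continuous, Lemma~\ref{lem:theta_cnt} guarantees the sections $s\mapsto Q(p;\theta(s))$ and $s\mapsto F(x;\theta(s))$ are usc, so both Proposition~\ref{prop:Sklar:I:max-stable} and Proposition~\ref{prop:Sklar:II:GEV} apply, and $U_{1,*}, V_{1,*} \in \mathcal{U}(\domain)$-induced maps are available.

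\medskip

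For the direction (ii)$\Rightarrow$(i): this is essentially already contained in Proposition~\ref{prop:Sklar:I:max-stable}. Indeed, if $\xi^*$ is simple max-stable, then since the sections $s\mapsto Q(p;\theta(s))$ are usc (Lemma~\ref{lem:theta_cnt}), condition~(ii) of Proposition~\ref{prop:Sklar:I:max-stable} holds, so $\xi = U_{1,*}(\xi^*)$ (up to the a.s.\ identification just established) is a max-stable usc process with normalizing functions $a_{n,\theta}, b_{n,\theta}$.

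\medskip

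For the direction (i)$\Rightarrow$(ii): here I would run the argument of Proposition~\ref{prop:Sklar:I:max-stable} in reverse, using the map $V_{1,*}$ with $V_1(s,x) = -1/\log F(x;\theta(s)) = \Phi^{-1}(F(x;\theta(s)))$, which lies in $\mathcal{U}(\domain)$ by Lemma~\ref{lem:compo} together with usc-ness of $s\mapsto F(x;\theta(s))$. First, $\xi^* \eqd V_{1,*}(\xi)$ in $\uscD$, because $\xi^*(s) = -1/\log F(\xi(s);\theta(s))$ pointwise and both sides are usc processes. Given iid copies $\xi_1,\dots,\xi_n$ of $\xi$, hypo-measurability of $V_{1,*}$ yields $(\xi_1^*,\dots,\xi_n^*) := (V_{1,*}(\xi_1),\dots,V_{1,*}(\xi_n))$ is a vector of iid copies of $\xi^*$. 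By monotonicity of $x\mapsto -1/\log F(x;\theta(s))$ for each fixed $s$,
\[
  \textstyle\bigvee_{i=1}^n \xi_i^*(s) = -1/\log F\bigl(\bigvee_{i=1}^n \xi_i(s); \theta(s)\bigr),
\]
and since $\xi$ is max-stable, $\bigvee_{i=1}^n \xi_i \eqd a_{n,\theta}\xi + b_{n,\theta}$ in $\uscD$ (the normalizing functions are forced to be $a_{n,\theta(s)}, b_{n,\theta(s)}$ by the marginal analysis in Subsection~\ref{subsec:max-stable}). Applying the hypo-measurable map induced by $(s,x)\mapsto -1/\log F(x;\theta(s))$ to both sides and using the max-stability relation~\eqref{eq:Q:max_stable} in the form $-1/\log F(a_{n,\theta(s)}x + b_{n,\theta(s)};\theta(s)) = n \cdot (-1/\log F(x;\theta(s)))$ gives $\bigvee_{i=1}^n \xi_i^* \eqd n\xi^*$ in $\uscD$, which is~\eqref{eq:defSimpleMStable}. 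Since $\xi^*$ has unit-Fr\'echet margins by Proposition~\ref{prop:Sklar:II:GEV}(i), it is simple max-stable.

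\medskip

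The main obstacle I anticipate is bookkeeping around the boundary value $\xi^*(s) = \infty$ (equivalently $F(\xi(s);\theta(s)) = 1$): one must check that the compositional identities $U_{1,*}\circ V_{1,*} = \mathrm{id}$ and $V_{1,*}\circ U_{1,*} = \mathrm{id}$ hold on the relevant trajectories, and this is only true up to the a.s.\ event where $\sup_{s\in K} F(\xi(s);\theta(s)) < 1$ on each compact $K$ — precisely the standing hypothesis. Thus the hypothesis must be invoked not only to get $\xi = \tilde\xi$ a.s., but also to ensure the two transformations are genuine mutual inverses on a probability-one set of paths, so that equalities in distribution in $\uscD$ can be pushed back and forth without loss. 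The other routine-but-necessary check is verifying the max-stability functional equation for $-1/\log F(\cdot;\theta(s))$, which is just~\eqref{eq:Q:max_stable} rewritten via $\Phi$ and $F(\cdot;\theta)$ being mutual transforms of the unit-Fr\'echet and $\GEV(\theta)$ laws.
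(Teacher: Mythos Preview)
Your proposal is correct and follows essentially the same route as the paper: establish $\xi=\tilde\xi$ a.s.\ via a countable compact exhaustion combined with Proposition~\ref{prop:Sklar:II:GEV}(ii), deduce (ii)$\Rightarrow$(i) directly from Proposition~\ref{prop:Sklar:I:max-stable}, and for (i)$\Rightarrow$(ii) transport max-stability through the hypo-measurable map $V_{1,*}(z)=-1/\log F(z;\theta)$ using the algebraic identity coming from~\eqref{eq:F_max-stable}. One minor remark: your anticipated obstacle about needing $U_{1,*}\circ V_{1,*}=\mathrm{id}$ on a full-probability set is slightly more than required---the paper (and your own argument, in fact) only uses the one-sided transport $V_{1,*}$ for (i)$\Rightarrow$(ii), so the finiteness hypothesis is invoked solely to secure $\xi=\tilde\xi$ a.s.
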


\begin{proof}
The fact that $\xi^*$ is a usc process with unit-Fr\'{e}chet margins is a consequence of the continuity of $\theta$ and Proposition~\ref{prop:Sklar:II:GEV}(i). Let $(K_n)_{n \in \nset}$ be a compact cover of $\domain$; the existence of a compact cover is guaranteed by the fact that $\domain$ is locally compact and second countable, whence Lindel\"{o}f. We have $\Pr[ \forall n \in \nset : \sup_{s \in K_n} F(\xi(s); \theta(s)) < 1 ] = 1$ by assumption and thus also $\Pr[ \forall s \in \domain : F(\xi(s); \theta(s)) < 1 ] = 1$. Proposition~\ref{prop:Sklar:II:GEV}(ii) implies that $\xi = \tilde{\xi}$ almost surely.

The functions $s \mapsto a_{n,\theta(s)}$ and $s \mapsto b_{n,\theta(s)}$ are continuous. The map $\domain \times [-\infty, \infty] \to [-\infty, \infty]$ defined by $(s, x) \mapsto a_{n,\theta(s)} \, x + b_{n,\theta(s)}$ belongs to $\mathcal{U}(\domain)$. By Proposition~\ref{prop:transfo}, the map from $\uscD$ to itself sending $z$ to the function $s \mapsto a_{n,\theta(s)} \, z(s) + b_{n,\theta(s)}$ is well-defined and hypo-measurable. It follows that $a_{n,\theta} \, \xi + b_{n,\theta}$ is a usc process.

Suppose first that~(ii) holds.  By Proposition~\ref{prop:Sklar:I:max-stable}, the usc process $\tilde\xi$ is max-stable with norming functions $s \mapsto a_{n,\theta(s)}$ and $s \mapsto b_{n,\theta(s)}$. Since $\xi$ and $\tilde\xi$ are equal almost surely in $\uscD$, they are also in equal in law. Statement~(i) follows.

Conversely, suppose that (i) holds. 
Let $(\xi^*_i )_{i=1}^n$ and $(\xi_i)_{i=1}^n$ be vectors of iid usc processes with common laws equal to the ones of $\xi^*$ and $\xi$, respectively. For $z\in\uscD$, write $ -1/\log F(z,\theta) = (-1/\log F(z(s),\theta(s) )_{s\in\domain}$. The mapping $z\mapsto -1/\log F(z,\theta)$ from $\uscD$ to itself is hypo-measurable, by an argument as in the proof of Proposition~\ref{prop:Sklar:II:GEV}.  For $i\in\{1,\ldots,n\}$, we have, in $\uscD$,
\[
  \xi^*_i \eqd \xi^* = -1/\log F(\xi,\theta) \eqd -1/\log F(\xi_i,\theta).
\]
By independence, we  thus have $(\xi^*_i )_{i=1}^n \eqd (-1/\log F(\xi_i,\theta))_{i=1}^n $. Property~(i) and max-stability \eqref{eq:F_max-stable} now say that
\begin{align*}
  \textstyle 
  \bigvee_{i=1}^n \xi^*_i 
  & \eqd 
  \textstyle 
  -1 / \log F( \bigvee_{i=1}^n \xi_i; \theta ) \\
  &\eqd
  \textstyle 
  -1 / \log F( a_{n,\theta} \xi + b_{n,\theta} ; \theta ) \\
  &=
  \textstyle 
  -1 / \log \{ F( \xi ; \theta )^{1/n} \} 
  =
  \textstyle 
  n \, [-1/\log F( \xi; \theta)] 
  =
  n \xi^*. 
\end{align*}
We conclude that $\xi^*$ is simple max-stable.
\end{proof}

\begin{rem}[Regarding the finiteness of $\xi^*$ in Proposition~\ref{prop:Sklar:II:GEV}(ii)]
Recall that usc functions reach their suprema on compacta. Let $(K_n)_{n \in \nset}$ be a compact cover of $\domain$. For every $z \in \uscD$, we have $z(s) < \infty$ for all $s \in \domain$ if and only if $\sup_{s \in K_n} z(s) < \infty$ for all $n \in \nset$. The event $\bigcap_{n \in \nset} \{\sup_{s \in K_n} \xi^*(s) < \infty\}$ is thus the same as the
event $\{ \forall s \in \domain : \xi^*(s) < \infty \}$.
\end{rem}

\begin{rem}[Regarding the continuity assumption on $\theta$ in Proposition~\ref{prop:Sklar:II:GEV}]
According to Lemmas~\ref{lem:Qsp} and~\ref{lem:theta_cnt}, imposing the continuity of the GEV parameter vector $\theta(s)$ as a function of $s \in \domain$ is equivalent to imposing the upper semicontinuity of the function $s\mapsto F(x, \theta(s))$ for each fixed $x\in\rset$.
\end{rem}

\subsection{Examples}
\label{subsec:max-stable:examples}

In comparison to Proposition~\ref{prop:Sklar:II}, we have added to Proposition~\ref{prop:Sklar:II:GEV} the assumption that the margins be GEV. Although their distribution functions are continuous and strictly increasing on their support, this does not resolve the issues arising when the marginal distributions are not continuous in space, as in Example~\ref{ex:what_if_b_fails}. In Example~\ref{ex:nonContinuousTheta}, which parallels Example~\ref{ex:what_if_b_fails}, the GEV parameter function $s \mapsto \theta(s)$ is not continuous and pointwise standardization to unit Fr\'{e}chet margins produces a stochastic process whose trajectories are no longer usc almost surely.

\begin{ex}[What may happen without the continuity of $\theta$]
  \label{ex:nonContinuousTheta}
Consider two independent, unit-Fr\'{e}chet distributed variables $X$ and
$Y$. As in Example~\ref{ex:what_if_b_fails}, take $\domain = [0,2]$, and define 
\[
  \xi(s) 
  = 
  X \vee (Y \ind_{\{1\}}(s))
  =
  \begin{cases}
    X & \quad \text{ if $s \neq  1$,} \\
    X\vee Y& \quad \text{ if $s = 1$.}
  \end{cases}
\]
Then again, $\xi$ is a usc process. It is even a max-stable one with normalizing functions $a_n \equiv n$ and $b_n\equiv 0$. The marginal distribution functions are 
\begin{equation*}
  F(x; \theta(s)) = \Pr[ \xi(s) \le x ] =
  \begin{cases}
    e^{-1/x} & \quad \text{ if $s \neq 1$,} \\
    e^{-2/x}& \quad \text{ if $s = 1$,}
  \end{cases}
\end{equation*}
where $x \ge 0$. Then the function $s\mapsto F_s(x)$ is lower rather than upper semicontinuous, and the marginal GEV parameter vector is 
\begin{equation*}
\theta(s) = (\gamma(s), \mu(s), \sigma(s)) = 
  \begin{cases}
    (1, 1, 1) & \quad \text{ if $s \neq 1$,} \\
    (1, 2, 2) & \quad \text{ if $s = 1$,}
  \end{cases}
\end{equation*}
which is not continuous as a function of $s$. Standardizing to Fr\'{e}chet margins yields
\begin{equation*}
   \xi^*(s) = -1/\log F(\xi(s); \theta(s)) =
   \begin{cases}
  X 
 & \quad \text{ if $s \neq 1$,} \\
 (X\vee Y) / 2
 & \quad \text{ if $s = 1$.} \\
   \end{cases}
 \end{equation*}
The event $\{0 < X < Y < 2X\}$ has positive probability, and on this event, we have  $(X\vee Y )/2  = Y/2 < X$, so that the trajectory $s \mapsto \xi^*(s)$ is not usc.
\end{ex}

To conclude, we present a construction principle for simple max-stable usc processes. 
In combination with Proposition~\ref{prop:Sklar:I:max-stable}, this provides a device for the construction of max-stable usc processes with arbitrary GEV margins. The method is similar to the one proposed in Theorem~2 in \cite{schlather2002models}. Proving max-stability of the usc process that we construct requires special care, since max-stability in $\uscD$ does not follow from max-stability of the finite-dimensional distributions. 

\begin{ex}
Let $Y_1 > Y_2 > Y_3 > \ldots$ denote the points of a Poisson point process on $(0, \infty)$ with intensity measure $y^{-2} \, \mathrm{d} y$. Let $V, V_1, V_2, \ldots$ be iid usc processes, independent of the point process $(Y_i)_i$. Assume that $V$ satisfies the following properties:
\begin{compactitem}
\item
  $\PP[\inf_{s\in \domain}  V(s) \ge 0]  = 1$; 
\item
  $\PE[ \sup_{s\in \domain} V(s) ] < \infty$; 
\item 
  the mean function $f(s) = \PE[ V(s) ]$ is strictly positive and continuous on $\domain$.
\end{compactitem}
By Lemma~\ref{lem:inf_rv} below, $\inf_\domain V$ is indeed a random variable. Note that we do \emph{not} impose that $\inf_\domain V > 0$ almost surely.

Define a stochastic process $\xi$ on $\domain$ by
\begin{equation*}
  \xi(s) = \sup_{i \ge 1} Y_i \frac{1}{f(s)} V_i(s), \qquad s \in \domain.
\end{equation*}
We will show that $\xi$ is  `almost surely' a simple max-stable usc process, in the following sense:
\begin{compactenum}[(a)]
\item
  \label{uscTraj} 
  with probability one, the trajectories of $\xi$ are usc;
\item
  \label{measMap} 
for some set $\Omega_1\subset \Omega$ of probability one on which the trajectories $\xi(\point,\omega)$ are usc, the map $\xi : \Omega_1\to \uscD$ is hypo-measurable.
\end{compactenum}

Consider the space of nonnegative usc functions
\[
  \uscD_+ 
  = \{z \in \uscD : \forall s \in \domain, z(s)\ge 0 \} 
  = \bigcap_{s\in\domain} \{ z \in \uscD: \; z(s) \ge 0 \}.
\]
For each fixed $s\in\domain$, the set $\{z\in\uscD: z(s) \ge 0 \}$ is hypo-closed, since it is the complement of the hypo-open set $\{ z \in \uscD : \hypo z \cap \{ (s, 0 ) \} = \varnothing \}$. The set $\uscD_+$ is thus hypo-closed, as an intersection of hypo-closed sets.

Set $\Espace = \uscD_+\setminus\{0_\domain\}$,  where $0_\domain$ denotes for the null function on $\domain$. Since $\uscD$ is a  compact space with a countable basis \cite[Theorem~B.2, p.~399]{molchanov2005theory}, the space $\Espace$ is 
locally compact with a countable basis. Classical theory of point processes applies and it is possible to define Poisson processes on $\Espace $ by augmentation and/or continuous mappings.

Put $ W_i = V_i / f$. Then for $i\in\nset$, $W_i$ is a random element of $\uscD$ such that $W_i\in \Espace$ with probability one.  The point process $\Gamma$ defined by
\[
  \Gamma = \sum_{i\ge 1} \delta_{(Y_i,   W _i )}.
\]
is thus a Poisson process on $(0,\infty)\times \Espace $ with mean
measure $ \ud\Lambda(y,w) = y^{-2}\ud y \otimes \ud P_{W}(w)$, where
$P_{ W}$ is the law of $ W_1$ (\cite{resnick1987extreme},
Proposition~3.8).

The `product' mapping $T : (0,\infty) \times \Espace \to \Espace$ defined by $T(y, w) = yw$, for $y>0$ and $w\in \Espace$, is hypo-measurable. Provided that the image measure $\mu =\Lambda\circ T^{-1}$ is finite on compact sets of $\Espace$, we find that the point process
\[
  \Pi = \sum_{i\ge 1} \delta_{(Y_i W_i )}
\]
is a Poisson process with mean measure $\mu$ on $\Espace$
(\cite{resnick1987extreme}, Proposition~3.7). To check finiteness of $\mu$ on compact sets of $\Espace$, we must check that for $K\subset \domain$ and $x > 0$, writing
\[
  \mathcal F_{K\times \{x\}} 
  = \{z\in\uscD_+: \sup_{s\in K} z(s) \ge x \}, 
\]
we have $ \mu(\Fell_{K\times\{x\}})<\infty$. Indeed, the set $\mathcal F_{K\times \{x\}}$ is hypo-closed in $\uscD$, and since $\uscD$ is compact, the compact sets in $\Espace$ are the hypo-closed sets $F$ in $\uscD$  such that $F\subset \Espace$. Thus $\mathcal F_{K\times \{x\}}$ is compact. Also, any compact set in $\Espace$ must be contained in such a $\mathcal F_{K \times   \{x\}}$. 
Now, 
\[
  \begin{aligned}
    \mu(\Fell_{K\times\{x\}}) &= 
- \Lambda\big\{ (r,w) \in(0,\infty )\times \uscD:
 r \max_{s\in K}  w(s)  \ge x \big\}
\\ 
& = 
 \PE \Big[ \int_{0}^\infty   \ind{\Big\{ r \ge \inf_{s\in
     K} \frac{x}{W(s)}\Big\}} \frac{\ud r}{r^2} \Big] \\
& =  \frac{1}{x}
 \PE \Big[   \sup_{s\in
     K_j} W(s) \Big] \\
&\le \frac{1}{x} \frac{\PE[ \sup_\domain V ]}{\inf_{\domain} f},
  \end{aligned}
\]
which is finite by assumption on $V$.

To show \eqref{uscTraj},  we adapt an argument of \cite{gine1990max}(proof of Theorem~2.1). For fixed $K\subset \domain$ compact and $x>0$, we have $\mu(\Fell_{K\times\{x\}})<\infty$ and thus $\Pi(\Fell_{K\times\{x\}})<\infty$ almost surely. Thus, there exists a set $\Omega_1$ of probability one, such that the following two statements hold for all $\omega \in \Omega_1$:
\begin{enumerate}[(i)]
\item
  \label{positW}
  $W_i(\point,\omega)\in\Espace$ for all $i\in\nset$;
\item
  \label{finitePi}
  $\Pi(\Fell_{K\times\{x\}})(\omega)<\infty$ for every rational $x>0$ and every compact rectangle $K \subset \domain$ with rational vertices. 
\end{enumerate}

Take $\omega\in\Omega_1$. We need to show that for  $s\in\domain$ and $x\in\qset$ such that $\xi(s, \omega)<x$, we have $\limsup_{t\to s}\xi(t, \omega) \le x$. Fix $s$ and $x$ as above, which implies that $x>0$. Let $K_n\searrow \{s\}$ be a collection of compact rational rectangles as above such that $s$ is in the interior of $K_n$. Then $\Fell_{\{(s, x)\}} = \bigcap_{n\in\nset} \Fell_{K_n\times\{x\}}$ (the inclusion `$\subset$' is immediate; the inclusion `$\supset$' is obtained by choosing for $z\in \bigcap_{n\in\nset} \Fell_{K_n\times\{x\}}$ and for $n\in\nset$ a point $s_n\in K_n$ such that $z(s_n)\ge x$ and then observing that $s_n\to s$ and thus $z(s) \ge x$ by upper semicontinuity). From our choice of $\Omega_1$, we have $\Pi(\Fell_{K_1\times \{x\}}, \omega) <\infty$, so that the downward continuity property of the measure $\Pi(\point, \omega)$ applies and 
\[
  \Pi(\Fell_{\{(s,x)\}}, \omega)
  = \lim_{n \to \infty} \Pi(\Fell_{K_n\times \{x\}}, \omega).
\]
By our choice of $s$, $x$, and $\omega$, the left-hand side in the display is zero. Since the sequence on the right-hand side is integer valued, there exists $n_0$ such that for all $n\ge n_0$, we have $\Pi(\Fell_{K_n \times \{x\}}, \omega) = 0$. This implies that for $n \ge n_0$, we have
\[
  \sup_{t \in  K_n}  Y_i(\omega) W_i(t,\omega) < x, \qquad i\in\nset.
\]
Complete the proof of \eqref{uscTraj} by noting that
\[
\limsup_{t\to s} \xi(t,\omega) \le
\sup_{t\in K_{n_0}}\xi(t,\omega)
= \sup_{i\in\nset}\sup_{t\in K_{n_0}}
Y_i(\omega) W_i(t,\omega)
\le x.    
\]    

To show \eqref{measMap},  we need to show that, for any compact $K \subset \domain
\times \rset$, the set 
\[ 
  A = \{ \omega \in \Omega_1: \hypo \xi(\point,\omega) \cap K\neq\varnothing \}
\]
is a measurable subset of $\Omega$. Notice first that $\xi(\point,\omega)\in\uscD_+$ for $\omega\in\Omega_1$; use property~\eqref{positW} of $\Omega_1$. It follows that $A = \Omega_1$ as soon as $K$ is not a subset of $\domain \times (0, \infty)$. Assume $K \subset \domain\times (0, \infty)$. 

On $\Omega_1$, we have $\hypo \xi \cap K \neq \varnothing$ if and only if
$\Pi( \{ z \in \Espace : \hypo z \cap K \neq \varnothing \} ) \ge 1$. Now $\Fell_K = \{ z \in \Espace : \hypo z \cap K \neq
\varnothing \}$ is a hypo-measurable subset of $\Espace$, so that
$X = \Pi(\Fell_K)$ is a random variable.  
It follows that  $ A = X^{-1}([1,\infty])\cap \Omega_1$  is
measurable, yielding~\eqref{measMap}. 

A standard argument yields that the margins of $\xi$ are unit-Fr\'{e}chet. To show that $\xi$ is simple max-stable, we need to show that, for independent random copies $\xi_1, \ldots, \xi_n$ of $\xi$, the capacity functionals of $\bigvee_{i=1}^n \xi_i$ and $n \xi$ are the same. That is, we need to show that, for every compact set in $\domain \times \rset$, we have
\begin{equation}
\label{eq:max-stable:simple:capacity}
  \textstyle
  \Pr[ \hypo(\bigvee_{i=1}^n \xi) \cap K = \varnothing ] = \Pr[ \hypo(n \xi) \cap K = \varnothing ].
\end{equation}
The left-hand side is equal to
\begin{align}
\nonumber
  \textstyle
  \Pr[ (\bigcup_{i=1}^n \hypo \xi_i) \cap K = \varnothing ]
  &= \Pr[ \forall i = 1, \ldots, n : \hypo \xi_i \cap K = \varnothing ] \\
\label{eq:maximum:capacity}
  &= (\Pr[ \hypo \xi \cap K = \varnothing ])^n.
\end{align}
Without loss of generality, we may assume that
\[
  \textstyle
  K = \bigcup_{j=1}^p  (K_j \times \{x_j\}),
\] 
where $p\in\nset$ and where $K_j \subset \domain$ is compact and $x_j$
is real, for $j\in\{1,\ldots,p\}$. Indeed, the capacity functional of
a usc process is entirely determined by its values on such compacta
(\cite{molchanov2005theory}, p.~340). We may also choose $x_j>0$, since otherwise both sides of \eqref{eq:max-stable:simple:capacity} vanish. For such a set $K$, we have 
\begin{align*}
\lefteqn{
\PP[ \hypo \xi \cap K = \varnothing ]
} \\
& = \PP\Big[\Pi\big\{z\in\Espace:
\exists \, 1\le j\le p,  \max_{K_j} z \ge x_j \big\} = 0 \Big]    \\
& = \exp\Big( - \Lambda\big\{ (r,w) \in(0,\infty )\times \Espace: 
\exists  j\le p\,,\,  r \max_{s\in K_j}  w(s)  \ge x_j \big\}\Big)
\\ 
& = \exp\Big( - 
 \PE\Big[ \int_{0}^\infty   \ind{\Big\{ \exists j\,:\, r \ge \min_{s\in
     K_j} \frac{x_j}{W(s)}\Big\}} \frac{\ud r}{r^2} \Big]\Big) \\
& = \exp\Big( -
 \PE \Big[\int_{0}^\infty  \ind{\Big\{ r \ge \min_j\min_{s\in
     K_j} \frac{x_j}{W(s)}\Big\}} \frac{\ud r}{r^2}   \Big]\Big)
\\
& = \exp\Big( -
 \PE \Big[   \max_j \frac{ \max_{s\in
     K_j} W(s)}{x_j} \Big]\Big).
\end{align*}
For $\xi$ replaced by $n \xi$, we obtain the same result, but with $x_j$ replaced by $x_j / n$. In view of \eqref{eq:maximum:capacity}, the desired equality \eqref{eq:max-stable:simple:capacity} follows.

\end{ex}

\section{Conclusion}
\label{sec:conclusion}

The aim of the paper has been to extend Sklar's theorem from random vectors to usc processes. We have stated necessary and sufficient conditions to be able to construct a usc process with general margins by applying the pointwise quantile transformation to a usc process with standard uniform margins (Propositions~\ref{prop:Sklar:I} and~\ref{prop:Sklar:I:max-stable}). Furthermore, we have stated sufficient conditions for the pointwise probability integral transform to be possible for usc processes (Propositions~\ref{prop:Sklar:II}, \ref{prop:Sklar:II:GEV} and~\ref{prop:Sklar:II:max-stable}). These conditions imply in particular that the marginal distribution functions are continuous with respect to the space variable (Lemma~\ref{lem:equiv_usc_continuous_cdf}). We have also provided several examples of things that can go wrong when these conditions are not satisfied. However, finding \emph{necessary} and sufficient conditions remains an open problem.

The motivation has been to extend the margins-versus-dependence
paradigm used in multivariate extreme-value theory to max-stable usc
processes. The next step is to show that marginal standardization is
possible in max-domains of attraction too. One question, for instance,
is whether the standardized weak limit of the pointwise maxima of a
sequence of usc processes is equal to the weak limit of the pointwise
maxima of the sequence of standardized usc processes
(\cite{resnick1987extreme}, Proposition~5.10). Interesting
difficulties arise: weak convergence of finite-dimensional
distributions does not imply and is not implied by weak
hypoconvergence; Khinchin's convergence-of-types lemma does not apply
in its full generality to unions of random closed sets
(\cite{molchanov2005theory}, p.~254, `Affine normalization').  This topic will be the subject of further work.



\appendix

\section{Right-continuous quantile functions}
\label{app:quantile}

The \emph{right-continuous} quantile function, $Q$, of a random
variable $X$ taking values in $[-\infty, \infty]$ and with distribution
function $F(x) = \PP[X\le x]$, $x\in[-\infty,\infty]$, is defined as
\begin{equation}
\label{eq:quantile:right}
  Q(p) = \sup \{ x \in \rset : F(x) \le p \}, \qquad p \in [0, 1].
\end{equation}
By convention, $\sup \varnothing = -\infty$ and $\sup \rset =
\infty$.
The fact that $Q$ is right-continuous is stated in part (ii) of the
next lemma. The function $Q$ is denoted by $F_+^{-1}$ in
  \cite{pickands:1971}, p.~749, who mentions properties (ii) and (iv)
  of Lemma~\ref{prop:quantile:right}. The corresponding statements for
  the right-continuous inverse,
  $p \mapsto \inf \{ x \in \rset : F(x) \ge p \}$, are well-known, see
  for instance Section~0.2 in
  \cite{resnick1987extreme}. 

\begin{lem}
\label{prop:quantile:right}
Let $X$ be a random variable taking values in $[-\infty, \infty]$. Define $Q : [0, 1] \to [-\infty, \infty]$ as in \eqref{eq:quantile:right}. 
\begin{enumerate}[(i)]
\item
For all $p \in [0, 1]$, we have $Q(p) = \sup \{ x \in \rset : \PP(X <
x) \le p \}$.
\item
The function $Q$ is non-decreasing and right-continuous.
\item
For every $x \in [-\infty, \infty]$ and every $p \in [0, 1]$, we have $x \le Q(p)$ if and only if $\Pr[ X < x ] \le p$.
\item
If $V$ is uniformly distributed on $[0, 1]$, then the distribution function of $Q(V)$ is $F$, i.e., $Q(V)$ and $X$ are identically distributed.
\item
If the law of $X$ has no atoms in $[-\infty, \infty]$, then $\Pr[ X = Q(F(X)) ] = 1$.
\end{enumerate}
\end{lem}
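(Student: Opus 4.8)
The plan is to prove the five parts in order, with (i) serving as the structural backbone: it identifies $Q$ as the upper generalized inverse of the \emph{left}-continuous function $g(x) := \PP[X<x] = F(x-)$, and (ii)--(v) are then manipulations of this inverse relationship. For (i), since $g \le F$ pointwise we have $\{x\in\rset : F(x)\le p\} \subseteq \{x\in\rset : g(x)\le p\}$, hence $Q(p)\le \sup\{x : g(x)\le p\}$; conversely, if $g(x)\le p$ then every real $y<x$ satisfies $F(y)\le g(x)\le p$, so $Q(p)\ge y$, and letting $y\uparrow x$ and then taking the supremum over such $x$ gives $Q(p)\ge\sup\{x:g(x)\le p\}$. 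Throughout one must respect the conventions $\sup\varnothing=-\infty$, $\sup\rset=+\infty$, and keep track of the extended-real endpoints $x=\pm\infty$ and of $p\in\{0,1\}$.

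For (ii), monotonicity of $Q$ is immediate because $p\le p'$ enlarges the defining set, and for right-continuity it suffices (using monotonicity) to show $Q(p)\ge \inf_{q>p}Q(q)$: take a real $x$ below that infimum, note that for each $q>p$ some real $y>x$ has $F(y)\le q$ so $F(x)\le q$, let $q\downarrow p$, and take the supremum over such $x$. Part (iii) is the heart of the matter. By (i) it amounts to showing $x\le \sup\{y\in\rset : g(y)\le p\}$ iff $g(x)\le p$. The ``if'' direction is clear for real $x$ and is handled by monotone limits when $x=\pm\infty$. For ``only if'', when $x$ is real and lies strictly below the supremum there is a real $y>x$ with $g(y)\le p$, so $g(x)\le g(y)\le p$; when $x$ \emph{equals} the supremum, pick reals $y_n\uparrow x$ each strictly below it, so $g(y_n)\le p$ for all $n$, and conclude $g(x)=\lim_n g(y_n)\le p$ from the left-continuity of $g$; the cases $x=\pm\infty$ are again handled by monotone limits.

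Parts (iv) and (v) are deductions from (iii). For (iv), fix $x$; by (iii), $\{p\in[0,1] : Q(p)\ge x\}=[\PP[X<x],1]$, so uniformity of $V$ gives $\PP[Q(V)<x]=\PP[X<x]$ for every $x$, and letting $x'\downarrow x$ (using $\{Q(V)<x'\}\downarrow\{Q(V)\le x\}$ and $\{X<x'\}\downarrow\{X\le x\}$) upgrades this to $\PP[Q(V)\le x]=F(x)$, i.e.\ $Q(V)\eqd X$. For (v), applying (iii) with $p=F(x)$ together with $\PP[X<x]\le F(x)$ gives $x\le Q(F(x))$ for all $x$, hence $Q(F(X))\ge X$ surely. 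Under atomlessness, $F$ is continuous and $\PP[X<x]=F(x)$, so (iii) reads $x\le Q(p)\iff F(x)\le p$; on $\{Q(F(X))>X\}$ there is a rational $q$ with $X<q\le Q(F(X))$, which forces both $F(q)\le F(X)$ (by (iii)) and $F(X)\le F(q)$ (monotonicity), hence $F(X)=F(q)$. Thus $\PP[Q(F(X))>X]\le\sum_{q\in\qset}\PP[F(X)=F(q)]=0$, the last equality because continuity of $F$ makes each level set $\{y:F(y)=c\}$ an interval of zero $X$-mass. (Equivalently, once $F$ is continuous one checks that $F(X)$ is uniform, applies (iv) to get $Q(F(X))\eqd X$, and uses that $Y\ge X$ a.s.\ together with $Y\eqd X$ forces $Y=X$ a.s., e.g.\ by testing against $\arctan$.)

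The main obstacle is part (iii): the equivalence $x\le Q(p)\iff\PP[X<x]\le p$ is the single fact on which (ii)--(v) all rely, and its delicate point is the borderline value $x=Q(p)$ in the ``only if'' direction, which genuinely uses the left-continuity of $x\mapsto\PP[X<x]$ rather than the right-continuity of $F$ itself. The remaining difficulty is purely bookkeeping of the extended-real and endpoint cases, together with, in (v), the short observation that a continuous $F$ has level sets of zero mass.
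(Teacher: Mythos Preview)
Your proof is correct and follows essentially the same route as the paper: part~(i) is the structural identification of $Q$ via $x\mapsto\PP[X<x]$, part~(iii) is the key equivalence (with the borderline case $x=Q(p)$ handled by left-continuity of $\PP[X<\cdot]$, just as in the paper), and parts~(iv)--(v) are derived from~(iii) in the same way. The only cosmetic difference is in~(v), where the paper isolates the countable set $\mathcal{P}$ of flat levels of $F$ and uses that $F(X)$ is uniform, whereas you use rational witnesses and a direct level-set mass argument; both are equivalent and correct.
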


\begin{proof}

\textit{(i)}
Fix $p \in [0, 1]$. Since $\Pr[X < x] \le \Pr[X \le x] = F(x)$, we have
\begin{align*}
  Q(p) 
  &:= 
  \sup \{ x \in \rset : F(x) \le p \} \\
  &\le \sup \{ x \in \rset : \Pr[ X < x ] \le p \} 
  := x_0.
\end{align*}
Conversely, if $Q(p)=+\infty$, there is nothing to prove. Assume then that $Q(p)<+\infty$, so that  $p<1$. Let $y > Q(p)$. We need to show that $y > x_0$ too, that is, $\Pr[X < y] > p$. But $\Pr[X < y] = \sup \{ F(z) : z < y \}$, and this supremum must be larger than $p$, since for all $z > Q(p)$ we have $F(z) > p$.

\textit{(ii)}
The sets $\{ x \in \rset : F(x) \le p \}$ becoming larger with $p$, the function $Q$ is non-decreasing. Next, we show that $Q$ is right continuous at any $p \in [0, 1]$. If
$Q(p) = \infty$, there is nothing to show, so suppose $Q(p) < \infty$
(in particular $p < 1$). Let $\eps > 0$. Then $Q(p) + \eps > Q(p)$ and
thus 
$F(Q(p) + \eps ) = p + \delta > p$
 for some $\delta > 0$. For $r < p+\delta$, we have 
$F( Q(p) + \eps )> r$
 too, and thus $Q(r) < Q(p) + \eps$, as required.

\textit{(iii)}
First suppose $x < Q(p)$; we show that $\Pr[X < x] \le p$. The case $x
= \infty$ is impossible, and if $x = -\infty$, then $\Pr[X < x] = 0
\le p$. So suppose that $x \in \rset$. Using statement
(i), there exists $y \in \rset$ with $x \le y \le Q(p)$ such that $\Pr[X < y] \le p$. But then also $\Pr[X < x] \le p$.

Second suppose that $x > Q(p)$; we show that $\Pr[X < x] >
p$. Clearly, we must have $Q(p) < \infty$, and so we can without loss
of generality assume that $x$ is real. But then $\Pr[X < x] > p$ by
statement (i).

Finally, consider $x = Q(p)$; we show that $\Pr[X < Q(p)] \le p$. If $Q(p) = -\infty$, then $\Pr[X < Q(p)] = 0 \le p$. If $Q(p) > -\infty$, then $\Pr[X < y] \le p$ for all $y < Q(p)$, and thus $\Pr[X < Q(p)] = \sup_{y < Q(p)} \Pr[X < y] \le p$ too. 

\textit{(iv)}
Without loss of generality, assume that $0 \le V \le 1$ (if not, then replace $V$ by $(V \vee 0) \wedge 1$, which is almost surely equal to $V$). Let $x \in [-\infty, \infty]$. By statement~(iii), we have $x \le Q(V)$ if and only if $\Pr[X < x] \le V$. As a consequence, $\Pr[ x \le Q(V) ] = \Pr[ \Pr[X < x] \le V ] = 1 - \Pr[X < x] = \Pr[x \le X]$. We conclude that $Q(V)$ and $X$ are identically distributed and thus that $Q(V)$ has distribution function $F$ too.

\textit{(v)}
By definition, $x \le Q(F(x))$ for every $x \in \rset$. For $x = -\infty$ and $x = +\infty$, the same inequality is trivially fulfilled too (recall that $F(\infty) = 1$ and $Q(1) = \infty$). As a consequence, $X \le Q(F(X))$. 

Conversely, let $\mathcal{P}$ be the collection of $p \in [0, 1]$ such that the set $\{ x \in \rset : F(x) = p \}$ has positive Lebesgue measure. These sets being disjoint for distinct $p$, the set $\mathcal{P}$ is at most countably infinite. If $x < Q(F(x))$, then there exists $y > x$ such that $F(x) = F(y)$ and thus $F(x) \in \mathcal{P}$. However, the law of $F(X)$ is standard uniform, so that $\Pr[ F(X) \in \mathcal{P} ] = 0$. Hence $X = Q(F(X))$ almost surely.
\end{proof}


\begin{lem}
\label{lem:limsupX:limsupQp}
Let $(X_n)_{n \in \mathbb{N}}$ be a sequence of random variables defined on the same probability space and taking values in $[-\infty, \infty]$. Let $Q_n$ and $Q$ be the right-continuous quantile functions \eqref{eq:quantile:right} of $X_n$ and $\limsup_{n \to \infty} X_n$, respectively. Then
\[
  Q(p) \ge \limsup_{n \to \infty} Q_n(p),
  \qquad p \in [0, 1].
\]
\end{lem}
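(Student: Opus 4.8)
The plan is to reduce everything to the characterization in Lemma~\ref{prop:quantile:right}(iii): for a random variable $Y$ with right-continuous quantile function $Q_Y$, one has $x \le Q_Y(p)$ if and only if $\PP[Y < x] \le p$. Write $X = \limsup_{n\to\infty} X_n$ and $L = \limsup_{n\to\infty} Q_n(p)$. It suffices to show that $x \le Q(p)$, i.e.\ $\PP[X < x] \le p$, for every real $x < L$, and then let $x \uparrow L$. The case $L = -\infty$ is trivial; if $L = +\infty$, running the argument over all real $x$ forces $Q(p) = +\infty$.

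First I would record the elementary inclusion $\{X < x\} \subseteq \bigcup_{N\ge 1} \bigcap_{n \ge N} \{X_n < x\}$: if $\limsup_n X_n(\omega) < x$, then $\sup_{n \ge N} X_n(\omega) < x$ for some $N$, hence $X_n(\omega) < x$ for all $n \ge N$. The events $A_N = \bigcap_{n\ge N}\{X_n < x\}$ increase with $N$, so continuity of $\PP$ from below gives $\PP[X < x] \le \lim_N \PP[A_N]$; and since $\PP[A_N] \le \PP[X_m < x]$ for every $m \ge N$, we get $\PP[A_N] \le \inf_{m \ge N}\PP[X_m < x]$ and therefore $\PP[X < x] \le \liminf_{n\to\infty}\PP[X_n < x]$.

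Next, since $x < L = \limsup_n Q_n(p)$, there is a subsequence $(n_k)_k$ with $Q_{n_k}(p) \to L$, so $x \le Q_{n_k}(p)$ for all large $k$; by Lemma~\ref{prop:quantile:right}(iii) this yields $\PP[X_{n_k} < x] \le p$ for all large $k$. Because the liminf of a sequence is at most the liminf of any of its subsequences, $\liminf_{n}\PP[X_n < x] \le \liminf_k \PP[X_{n_k} < x] \le p$. Combined with the previous paragraph, $\PP[X < x] \le p$, hence $x \le Q(p)$ by Lemma~\ref{prop:quantile:right}(iii) once more. Letting $x$ increase to $L$ gives $Q(p) \ge L$, as required.

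There is no deep obstacle here; the only points requiring care are the boundary cases $L \in \{-\infty,+\infty\}$ and keeping the strict-versus-non-strict inequalities straight when moving between $x$, $Q_n(p)$, and $\PP[X_n < x]$. The proof is otherwise a routine combination of continuity of measures from below with Lemma~\ref{prop:quantile:right}(iii).
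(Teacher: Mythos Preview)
Your proof is correct and takes essentially the same approach as the paper: both combine the characterization in Lemma~\ref{prop:quantile:right} with the Fatou-type bound $\PP[\limsup_n X_n < x] \le \liminf_n \PP[X_n < x]$. The only cosmetic difference is that the paper argues the contrapositive, starting from $y > Q(p)$ and concluding $y \ge \limsup_n Q_n(p)$ directly (so no subsequence is needed), whereas you start from $x < L$ and pass through a subsequence along which $Q_{n_k}(p) \to L$.
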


\begin{proof}
If $Q(p) = \infty$, there is nothing to show, so suppose $Q(p) < \infty$. Let $y > Q(p)$; we will show that $y \ge \limsup_{n \to \infty} Q_n(p)$. This being true for all $y > Q(p)$, we will have proved the proposition.

By Lemma~\ref{prop:quantile:right}(i), we have $\Pr[ \limsup_{n \to \infty} X_n < y
] > p$. By Fatou's lemma,
there exists a positive integer $n(y)$ such that $\Pr[ X_n < y ] > p$ for all integer $n \ge n(y)$. But for such $n$, we have $y > Q_n(p)$ too. Hence, $y \ge \limsup_{n \to \infty} Q_n(p)$, as required.
\end{proof}

\section{Additional examples related to Proposition~\ref{prop:Sklar:II}}
\label{ap:examplesSklar2}

\begin{ex}[Condition~(b) in Proposition~\ref{prop:Sklar:II} is not necessary]
\label{ex:b_not_necess}
Let $\domain = [-1,1]$ and let $X$ and $V$ be independent random variables, $X$ standard normal and $V$ uniform on $[0, 1]$. Define
\begin{equation*}
  \xi(s) = 
  \begin{cases}
    X &  \text{if $-1 \le s \le 0$,} \\
    X-1 &  \text{if $0 < s < V$,} \\
    X & \text{if $V \le s \le 1$.}
  \end{cases}
\end{equation*}
Let $\Phi$ be the standard normal cumulative distribution function and choose $x \in \rset$. Then $F_s(x) = \Pr[ \xi(s) \le x ] = \Phi(x)$ if $s \in [-1, 0]$, while for $s \in (0, 1]$, we have
\begin{align*}
  F_s(x) 
  &= \Pr[s < V] \, \Pr[X-1 \le x] + \Pr[V \le s] \, \Pr[X \le x] \\
  &= (1-s) \, \Phi(x+1) + s \, \Phi(x).
\end{align*}
The function $s \mapsto F_s(x)$ is constant on $s \in [-1, 0]$ while it decreases linearly from $\Phi(x+1)$ to $\Phi(x)$ for $s$ from $0$ to $1$, the right-hand side limit at $0$ being equal to $\Phi(x+1)$, which is greater than $\Phi(x)$, the value at $s = 0$ itself. Hence the function $s \mapsto F_s(x)$ is lower but not upper semicontinuous, and condition~(b) in Proposition~\ref{prop:Sklar:II} does not hold. Nevertheless, the random variables $Z(s) = F_s(\xi(s))$, $s \in \domain$, are given as follows:
\[
  Z(s) = F_s(\xi(s)) =
  \begin{cases}
    \Phi(X) & \text{if $-1 \le s \le 0$,} \\
    (1-s) \, \Phi(X) + s \, \Phi(X-1) & \text{if $0 < s < V$,} \\
    (1-s) \, \Phi(X+1) + s \, \Phi(X) & \text{if $V \le s \le 1$.}
  \end{cases}
\]
The trajectory of $Z$ is continuous at $s \in [-1, 1] \setminus \{
V \}$ and usc at $s = V$, hence usc overall.
\end{ex}

\begin{ex}[$\xi$ and $\tilde{\xi}$ in Proposition~\ref{prop:Sklar:II} may be different in law (1)]
\label{ex:not_equal_in_law}
Let $X$ and $Y$ be independent, uniform random variables on $[0, 1]$. For $s \in \domain = [0, 1]$, define
\[
  \xi(s) = X + \ind(Y = s) =
  \left\{
  \begin{array}{ll}
    X & \text{if $s \neq Y$,} \\
    X + 1 & \text{if $s = Y$.}
  \end{array}
  \right.
\]
Since $\Pr[Y = s] = 0$ for every $s \in [0, 1]$, the law of $\xi(s)$ is standard uniform. Conditions~(a) and~(b) in Proposition~\ref{prop:Sklar:II} are trivially fulfilled with $F_s(x) = (x \vee 0) \wedge 1$ for $x \in [-\infty, \infty]$ and $Q_s(p) = p$ for $p \in [0, 1)$ while $Q_s(1) = \infty$. We obtain
\[
  Z(s) = F_s(\xi(s)) =
  \left\{
  \begin{array}{ll}
    X & \text{if $s \neq Y$,} \\
    1 & \text{if $s = Y$,}
  \end{array}
  \right.
\]
and thus
\[
  \tilde{\xi}(s) = Q_s(Z(s)) =
  \left\{
  \begin{array}{ll}
    X & \text{if $s \neq Y$,} \\
    \infty & \text{if $s = Y$.}
  \end{array}
  \right.
\]
The processes $\tilde{\xi}$ and $\xi$ have different capacity functionals and thus a different distribution as random elements in $\uscD$: for $K = [0, 1] \times \{x\}$ with $x > 2$, we have $\Pr[\hypo \xi \cap K \neq \varnothing] = 0$ while $\Pr[ \hypo \tilde{\xi} \cap K \neq \varnothing ] = 1$.
\end{ex}

\begin{ex}[$\xi$ and $\tilde{\xi}$ in Proposition~\ref{prop:Sklar:II} may be different in law (2)]
\label{ex:not_equal_in_law2}
Let $X$ and $Y$ be independent random variables, with $X$ uniformly distributed on $[0, 1] \cup [2, 3]$ and $Y$ uniformly distributed on $[0, 1]$. For $s \in \domain = [0, 1]$, define
\[
  \xi(s) = X \vee (1.5 \times \ind(Y = s))
  =
  \begin{cases}
    X & \text{if $s \neq Y$ or $X>1$,} \\
    1.5 & \text{if $s = Y$ and $X \le 1$.}
  \end{cases}
\]
Then $\Pr[\xi(s) = X] = 1$ for all $s \in [0, 1]$, so that the marginal distribution and quantile functions $F_s$ and $Q_s$ do not depend on $s$ and are equal to those of $X$, denoted by $F_X$ and $Q_X$. The random variable $U = F_X(X)$ is uniformly distributed on $[0, 1]$. Since $F_X(1.5) = F_X(1) = 0.5$, we have
\[
  Z(s) = F_X(\xi(s))
  =
  \begin{cases}
    U & \text{if $s \neq Y$ or $X > 1$,} \\
    0.5 & \text{if $s = Y$ and $X \le 1$.}
  \end{cases}
\]
However, as $Q_X(0.5) = 2$, we obtain
\[
  \tilde{\xi}(s) = Q_X(Z(s))
  =
  \begin{cases}
    X & \text{if $s \neq Y$ or $X > 1$,} \\
    2 & \text{if $s = Y$ and $X \le 1$.}
  \end{cases}  
\]
On the event $\{X \le 1\}$, which occurs with probability one half,
the hypographs of $\xi$ and $\tilde \xi$ are different.
\end{ex}

\section{Lower bounds of usc processes}

\begin{lem}
\label{lem:lower_bound}
If $\xi$ is a usc process, then the function $\ell(s) =
\sup \{ x \in \rset : \Pr[ \xi(s) < x ] = 0 \}$, for $s \in \domain$,
is usc. Moreover, $\xi \vee \ell$ is a usc process too, and we have $\Pr[ \xi \ge \ell ] = \Pr[ \xi = \xi \vee \ell ] = 1$.
\end{lem}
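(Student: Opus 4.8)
The plan is to establish the three assertions in turn: upper semicontinuity of $\ell$, then that $\xi \vee \ell$ is a usc process, then the almost-sure identity. Observe first that, pointwise, $\{\xi \ge \ell\} = \{\xi = \xi \vee \ell\}$, so the two probability-one statements refer to the same event.

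For the upper semicontinuity of $\ell$, I would identify $\ell(s)$ with the value at $0$ of the right-continuous quantile function of $\xi(s)$: by Lemma~\ref{prop:quantile:right}(i), $\ell(s) = \sup\{x \in \rset : \Pr[\xi(s) < x] \le 0\} = Q_s(0)$, the first equality holding because probabilities are nonnegative. Since $\xi$ is a usc process its trajectories are usc, so Lemma~\ref{lem:Qsp} applies: the function $(s,x) \mapsto Q_s((x \vee 0) \wedge 1)$ belongs to $\mathcal{U}(\domain)$, and condition~(b) of Definition~\ref{def:UD} evaluated at $x = 0$ says precisely that $s \mapsto Q_s(0) = \ell(s)$ is usc. (One could alternatively re-run the argument of Lemma~\ref{lem:Qsp} at the level $p = 0$ directly, via Lemma~\ref{lem:limsupX:limsupQp}.) Given that $\ell \in \uscD$, the second assertion is then immediate: by Example~\ref{ex:transfo:simple}(ii) the map $(s,x) \mapsto x \vee \ell(s)$ lies in $\mathcal{U}(\domain)$ with associated transformation $z \mapsto z \vee \ell$, which is hypo-measurable by Proposition~\ref{prop:transfo}(iv); hence $\xi \vee \ell$, being the composition of that map with the usc process $\xi$, is a usc process.

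The third assertion is the delicate one. A direct union bound over the events $\{\xi(s) \ge \ell(s)\}$, $s \in \domain$, is useless because $\domain$ is uncountable, so I would use the hypo-topology. First, reduce: if $\xi(s) < \ell(s)$ for some $s$, then interpolating a rational gives $q \in \qset$ with $\xi(s) < q < \ell(s)$; hence, setting $C_q = \{s \in \domain : \ell(s) > q\}$, one has $\{\exists s \in \domain : \xi(s) < \ell(s)\} \subseteq \bigcup_{q \in \qset}\{\exists s \in C_q : \xi(s) < q\}$. For $s \in C_q$, the definition of $\ell(s)$ as a supremum provides $x \in \rset$ with $x > q$ and $\Pr[\xi(s) < x] = 0$, whence $\Pr[\xi(s) < q] = 0$. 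The key topological observation is that for each $s$ and $q$ the level set $\{z \in \uscD : z(s) < q\} = \{z \in \uscD : \hypo z \cap \{(s,q)\} = \varnothing\}$ is hypo-open; therefore $\mathcal{G}_q := \{z \in \uscD : \exists s \in C_q,\ z(s) < q\} = \bigcup_{s \in C_q}\{z : z(s) < q\}$ is hypo-open, hence hypo-measurable, and $\{\xi \in \mathcal{G}_q\}$ is an event. Since $\uscD$ is second countable, so is the open set $\mathcal{G}_q$, which is thus Lindel\"of, so the cover $\{\{z : z(s) < q\}\}_{s \in C_q}$ of $\mathcal{G}_q$ has a countable subcover indexed by some countable $S_q \subseteq C_q$. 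Then $\{\xi \in \mathcal{G}_q\} = \bigcup_{s \in S_q}\{\xi(s) < q\}$ and $\Pr[\xi \in \mathcal{G}_q] \le \sum_{s \in S_q}\Pr[\xi(s) < q] = 0$. Summing over $q \in \qset$ yields $\Pr[\exists s : \xi(s) < \ell(s)] = 0$, that is, $\Pr[\xi \ge \ell] = \Pr[\xi = \xi \vee \ell] = 1$.

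I expect the main obstacle to be exactly this last step: recognising that the uncountable bad event $\{\exists s : \xi(s) < \ell(s)\}$ is hypo-measurable and can be reduced to a countable union of null events, which relies on the hypo-openness of the level sets $\{z : z(s) < q\}$ together with second countability (Lindel\"of) of $\uscD$. The remaining pieces — identifying $\ell(s) = Q_s(0)$, the rational interpolation, and the implication $s \in C_q \Rightarrow \Pr[\xi(s) < q] = 0$ — are routine.
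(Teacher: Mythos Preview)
Your proof is correct. For the first two assertions you follow the paper exactly. For the almost-sure inequality $\xi \ge \ell$, your route differs from the paper's. The paper factors out a separability lemma (Lemma~\ref{lem:z_separable}): for every $z \in \uscD$ there is a countable $\mathbb{Q}_z \subset \domain$, obtained by projecting a countable dense subset of $\hypo z \subset \domain \times \rset$, such that any $x \in \uscD$ satisfies $x \ge z$ everywhere if and only if $x(t) \ge z(t)$ for all $t \in \mathbb{Q}_z$; applied with $z = \ell$, this rewrites $\{\xi \ge \ell\}$ as a countable intersection of probability-one events. You instead work in $\uscD$: after rational interpolation you observe that each $\mathcal{G}_q$ is hypo-open and, by second countability of $\uscD$, extract a countable subcover indexed by $S_q \subset C_q$, reducing $\{\xi \in \mathcal{G}_q\}$ to a countable union of null events. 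Both arguments ultimately rely on second countability to pass from uncountable to countable families, but in different ambient spaces: the paper uses separability of $\hypo \ell$ in $\domain \times \rset$, while you use the Lindel\"of property of $\uscD$. The paper's route isolates a reusable pointwise criterion; your route is more self-contained and makes the hypo-measurability of the exceptional event explicit without an auxiliary lemma.
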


\begin{proof}
The function $\ell$ is equal to the function $s \mapsto Q_s(0)$, with $Q_s$ the right-continuous quantile function \eqref{eq:quantile:right} of $\xi_s$. By Lemma~\ref{lem:Qsp}, the function $\ell$ is usc, and by Example~\ref{ex:transfo:simple}, the map $\uscD \to \uscD : z \mapsto z \vee \ell$ is hypo-measurable, so that $\xi \vee \ell$ is a usc process too. 

By Lemma~\ref{lem:z_separable}, there exists a countable subset, $\mathbb{Q}_\ell$, of $\domain$ with the following property: for all $x \in \uscD$, we have $x(s) \ge \ell(s)$ for all $s \in \domain$ if and only if $x(t) \ge \ell(t)$ for all $t \in \mathbb{Q}_\ell$. Since $\ell(t) = Q_t(0)$, we have $\Pr[ \xi(t) \ge \ell(t) ] = 1$ for all $t \in \mathbb{Q}_\ell$; see Lemma~\ref{prop:quantile:right}(iii). Since $\mathbb{Q}_\ell$ is countable, also $\Pr[ \forall t \in \mathbb{Q}_\ell : \xi(t) \ge \ell(t) ] = 1$. By the property of $\mathbb{Q}_\ell$ mentioned earlier, the event $\{ \forall t \in \mathbb{Q}_\ell : \xi(t) \ge \ell(t) \}$ is equal to both $\{ \xi \ge \ell \}$ and $\{ \xi = \xi \vee \ell\}$.
\end{proof}

\begin{lem}
\label{lem:z_separable}
For every $z \in \uscD$, there exists a countable set $\mathbb{Q}_z \subset \domain$ such that
\begin{equation}
\label{eq:zQ}
  z(s) = \inf_{\eps > 0} \sup_{t \in \mathbb{Q}_z : d(s, t) \le \eps} z(t), \qquad s \in \domain.
\end{equation}
In particular, for every $x \in \uscD$, we have $x(s) \ge z(s)$ for all $s \in \domain$ if and only if $x(t) \ge z(t)$ for all $t \in \mathbb{Q}_z$.
\end{lem}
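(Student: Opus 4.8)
The plan is to exploit second countability of $\domain$ to build the set $\mathbb{Q}_z$ out of a countable base, together with the fact that a usc function is determined by its values on a dense set in a suitable $\limsup$ sense. First I would fix a countable base $\{B_m : m \in \nset\}$ for the topology of $\domain$, and for each $m$ such that $B_m \neq \varnothing$ I would choose a sequence $(t_{m,k})_{k \in \nset}$ of points of $B_m$ with $z(t_{m,k}) \to \sup_{t \in B_m} z(t)$ as $k \to \infty$ (such a sequence exists since the supremum of a real-extended-valued function is a limit of values it attains; if $\sup_{B_m} z = -\infty$ any single point of $B_m$ will do). Let $\mathbb{Q}_z$ be the countable collection of all these points $t_{m,k}$. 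The point of this construction is that for every open $B_m$ we have $\sup_{t \in \mathbb{Q}_z \cap B_m} z(t) = \sup_{t \in B_m} z(t)$.

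\medskip

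The heart of the argument is then to verify~\eqref{eq:zQ}. The inequality $\inf_{\eps > 0} \sup_{t \in \mathbb{Q}_z : d(s,t) \le \eps} z(t) \le \inf_{\eps > 0} \sup_{t \in \domain : d(s,t) \le \eps} z(t) = z(s)$, where the last equality is exactly the upper semicontinuity of $z$ at $s$ (every $s_n \to s$ has $\limsup z(s_n) \le z(s)$, and conversely the constant sequence gives $\ge$), is immediate since we are taking the supremum over a smaller set on the left. For the reverse inequality, fix $s \in \domain$ and $\eps > 0$, and pick an open base element $B_m$ with $s \in B_m \subset \{t : d(s,t) < \eps\}$ — possible because the balls around $s$ form a neighbourhood base and $\{B_m\}$ is a base. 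Then
\[
  \sup_{t \in \mathbb{Q}_z : d(s,t) \le \eps} z(t)
  \ge \sup_{t \in \mathbb{Q}_z \cap B_m} z(t)
  = \sup_{t \in B_m} z(t)
  \ge z(s),
\]
the middle equality being the defining property of $\mathbb{Q}_z$ and the last inequality holding because $s \in B_m$. Taking the infimum over $\eps > 0$ gives $\inf_{\eps>0}\sup_{t \in \mathbb{Q}_z : d(s,t)\le\eps} z(t) \ge z(s)$, completing~\eqref{eq:zQ}.

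\medskip

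Finally I would deduce the ``in particular'' clause. The forward direction is trivial since $\mathbb{Q}_z \subset \domain$. For the converse, suppose $x \in \uscD$ satisfies $x(t) \ge z(t)$ for all $t \in \mathbb{Q}_z$. Then for every $s \in \domain$ and every $\eps > 0$,
\[
  \sup_{t \in \mathbb{Q}_z : d(s,t) \le \eps} x(t)
  \ge \sup_{t \in \mathbb{Q}_z : d(s,t) \le \eps} z(t),
\]
and taking the infimum over $\eps > 0$, using~\eqref{eq:zQ} applied to $z$ on the right and the easy inequality $\inf_{\eps>0}\sup_{t \in \mathbb{Q}_z : d(s,t)\le\eps} x(t) \le x(s)$ (which follows from upper semicontinuity of $x$, exactly as above) on the left, yields $x(s) \ge z(s)$. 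I expect the main obstacle to be purely bookkeeping: making sure the base $\{B_m\}$ is genuinely countable (which is where second countability of $\domain$, inherited from the ambient Euclidean space, enters) and handling the degenerate cases $\sup_{B_m} z \in \{-\infty, +\infty\}$ cleanly so that the chosen sequences $(t_{m,k})_k$ always exist; there is no deep difficulty beyond that.
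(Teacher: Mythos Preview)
Your proof is correct but proceeds by a genuinely different construction from the paper's. The paper observes that $\hypo z$ is a separable subset of the metrizable separable space $\domain \times \rset$, takes a countable dense subset $\mathbb{Q} \subset \hypo z$, and defines $\mathbb{Q}_z$ as its projection onto $\domain$; the reverse inequality in~\eqref{eq:zQ} then follows by approximating any point $(s,\alpha) \in \hypo z$ with a sequence $(t_n,\alpha_n) \in \mathbb{Q}$, so that $z(t_n) \ge \alpha_n \to \alpha$. You instead work entirely in $\domain$: fixing a countable base $\{B_m\}$, you pick for each $B_m$ a sequence realizing $\sup_{B_m} z$, and argue via the identity $\sup_{\mathbb{Q}_z \cap B_m} z = \sup_{B_m} z$. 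Your route is slightly more hands-on and avoids the hypograph viewpoint altogether, which makes it self-contained; the paper's route is shorter and meshes naturally with its ambient framework of identifying usc functions with closed hypographs. Both arguments ultimately rest on second countability and the usc-hull identity, and both handle the ``in particular'' clause the same way, via upper semicontinuity of $x$.
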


\begin{proof}
The set $\hypo z$ is a subset of the metrizable separable space $\domain \times \rset$. Hence, it is separable too. Let $\mathbb{Q}$ be a countable, dense subset of $\hypo z$. Let $\mathbb{Q}_z$ be the set of $t \in \domain$ such that $(t, x) \in \mathbb{Q}$ for some $x \in \rset$. Then $\mathbb{Q}_z$ is a countable subset of $\domain$.

Let $y(s)$ denote the right-hand side of \eqref{eq:zQ}. Since $z$ is usc, we have $z(s) \ge y(s)$ for all $s \in \domain$.

Conversely, let $s \in \domain$. If $z(s) = -\infty$, then trivially $y(s) \ge z(s)$. Suppose $z(s) > -\infty$. Let $-\infty < \alpha < z(s)$, so that $(s, \alpha) \in \hypo z$. Find a sequence $(t_n, \alpha_n) \in \mathbb{Q}$ such that $(t_n, \alpha_n) \to (s, \alpha)$ as $n \to \infty$. Then $(t_n, \alpha_n) \in \hypo z$ and thus $z(t_n) \ge \alpha_n$ for all $n$. Moreover, $t_n \to s$ and $\alpha_n \to \alpha$ as $n \to \infty$. It follows that $y(s) \ge \alpha$. Since this is true for all $\alpha < z(s)$, we find $y(s) \ge z(s)$.

We prove the last statement. Let $x \in \uscD$ and suppose that $x(t) \ge z(t)$ for all $t \in \mathbb{Q}_z$. The function $x$ is equal to its own usc hull, i.e., 
\[ 
  x(s) = \inf_{\eps > 0} \sup_{t \in \domain : d(s, t) \le \eps} x(t), \qquad s \in \domain. 
\]
Combine this formula together with \eqref{eq:zQ} to see that $x(s) \ge z(s)$ for all $s \in \domain$.
\end{proof}

\begin{lem}
\label{lem:inf_rv}
If $\xi$ is a usc process, then $\inf_{s \in F} \xi(s)$ is a random variable for any closed set $F \subset \domain$.
\end{lem}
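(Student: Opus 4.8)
The plan is to verify measurability of $\omega\mapsto\inf_{s\in F}\xi(s,\omega)$ by showing that the event $\{\,\inf_{s\in F}\xi(s)\ge q\,\}$ lies in $\mathcal{A}$ for every $q\in\rset$; since the half-lines $[q,\infty]$ with $q\in\qset$ generate the Borel $\sigma$-field of $[-\infty,\infty]$, this is enough to conclude that $\inf_{s\in F}\xi(s)$ is a random variable. (If $F=\varnothing$ the infimum is the constant $+\infty$ and there is nothing to prove, so assume $F\neq\varnothing$.)

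The crux is to replace the uncountable index set $F$ by a countable one. Because $F$ is a subset of a Euclidean space it is separable, so we may fix once and for all a countable dense subset $D\subseteq F$. I claim that for every $x\in\uscD$ and every $q\in\rset$,
\[
  \bigl(x(s)\ge q\ \text{for all}\ s\in F\bigr)
  \quad\Longleftrightarrow\quad
  \bigl(x(t)\ge q\ \text{for all}\ t\in D\bigr).
\]
The forward implication is trivial since $D\subseteq F$. For the reverse, fix $s\in F$ and choose $t_n\in D$ with $t_n\to s$, which is possible as $D$ is dense in $F$. Then $(t_n,q)\in\hypo x$ for every $n$, and since $\hypo x$ is closed (equivalently, since $x$ is usc, $\limsup_n x(t_n)\le x(s)$) the limit point $(s,q)$ also lies in $\hypo x$, i.e.\ $x(s)\ge q$. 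Alternatively, this equivalence is an instance of the last assertion of Lemma~\ref{lem:z_separable}, applied to the usc function equal to $q$ on the closed set $F$ and to $-\infty$ elsewhere.

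Granting the claim, we obtain
\[
  \Bigl\{\,\inf_{s\in F}\xi(s)\ge q\,\Bigr\}
  =\bigcap_{t\in D}\{\,\xi(t)\ge q\,\},
\]
a countable intersection of measurable sets: measurability of each $\{\xi(t)\ge q\}$ follows from hypo-measurability of $\xi$ together with hypo-measurability of the evaluation map $\uscD\to[-\infty,\infty]$, $x\mapsto x(t)$. This completes the argument. There is no serious obstacle; the one delicate point is precisely this reduction to the countable set $D$ — an uncountable infimum of measurable functions need not be measurable in general, and it is exactly the upper semicontinuity of the trajectories (closedness of their hypographs) that allows a single, $\omega$-independent countable set $D$ to detect whether $\xi(\,\cdot\,,\omega)\ge q$ throughout $F$. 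In particular no compactness of $F$ is required.
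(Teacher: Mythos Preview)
Your proof is correct and follows essentially the same approach as the paper's: both reduce to a countable dense subset $D\subseteq F$ and use closedness of the hypograph to show that $\xi(\cdot)\ge q$ on $D$ forces $\xi(\cdot)\ge q$ on all of $F$. The paper packages this as the pointwise equality $\inf_{s\in F}\xi(s)=\inf_{s\in D}\xi(s)$, whereas you verify measurability via the level sets $\{\inf_F\xi\ge q\}=\bigcap_{t\in D}\{\xi(t)\ge q\}$, but these are equivalent formulations of the same argument.
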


\begin{proof}
Let $\mathbb{Q}$ be a countable, dense subset of $F$. Since every $\xi(s)$ is a random variable, it suffices to show that $\inf_{s \in F} \xi(s) = \inf_{s \in \mathbb{Q}} \xi(s)$. The inequality `$\le$' is trivial. To see the other inequality, suppose that $x$ is such that $\xi(s) \ge x$ for all $s \in \mathbb{Q}$. Then $(s, x) \in \hypo \xi$ for all $s \in \mathbb{Q}$, and thus $(s, x) \in \hypo \xi$ for all $s \in F$, since $\hypo \xi$ is closed. It follows that $\xi(s) \ge x$ for all $s \in F$.
\end{proof}

\section{Continuity of the GEV parameter}


Recall the GEV distributions from Subsection~\ref{subsec:gev}.

\begin{lem}
\label{lem:continuityGEVparameter}
Let $ F_n = F(\point, \theta_n)$, $n \ge 0$, be GEV distribution functions with associated GEV parameters $  \theta_n = (\mu_n,\sigma_n,\gamma_n)$. If $(F_n)_{n}$ converges weakly to $F_0$, then also $\lim_{n \to \infty} \theta_n = \theta_0$ in $\rset\times\rset\times(0,\infty)$. 
\end{lem}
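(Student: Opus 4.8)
The plan is to show that weak convergence of GEV distribution functions forces convergence of the three parameters, exploiting the explicit formulas \eqref{eq:GEV-distrib} and \eqref{eq:gev:Q}. Since a GEV distribution function is continuous and strictly increasing on the interior of its support, weak convergence $F_n \to F_0$ is equivalent to pointwise convergence $F_n(x) \to F_0(x)$ at every continuity point of $F_0$, hence at every $x$ in the interior of the support of $F_0$; equivalently, $Q(p;\theta_n) \to Q(p;\theta_0)$ for every $p \in (0,1)$. I will work with the quantile functions, since the formula \eqref{eq:gev:Q} is an explicit smooth function of $(\gamma,\mu,\sigma)$ for each fixed $p$.

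First I would fix three values $0 < p_1 < p_2 < p_3 < 1$, say $p_i = e^{-t_i}$ with $t_1 > t_2 > t_3 > 0$, so that $Q(p_i;\theta) = \mu + \sigma(t_i^{-\gamma}-1)/\gamma$ when $\gamma \ne 0$ and $Q(p_i;\theta) = \mu - \sigma\log t_i$ when $\gamma = 0$. The convergence of the three quantities $q_i^{(n)} := Q(p_i;\theta_n) \to q_i^{(0)}$ gives three equations. Taking differences kills $\mu$: $q_1^{(n)} - q_2^{(n)}$ and $q_2^{(n)} - q_3^{(n)}$ depend only on $(\gamma_n,\sigma_n)$, and their ratio
\[
  R(\gamma) \;=\; \frac{t_1^{-\gamma}-t_2^{-\gamma}}{t_2^{-\gamma}-t_3^{-\gamma}}
\]
(with the obvious continuous extension $R(0) = (\log t_1 - \log t_2)/(\log t_2 - \log t_3)$) depends only on $\gamma$. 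The key analytic fact to verify is that $\gamma \mapsto R(\gamma)$ is a continuous, strictly monotone bijection from $\rset$ onto an open interval; strict monotonicity follows because the map $\gamma \mapsto (t_1^{-\gamma}, t_2^{-\gamma}, t_3^{-\gamma})$ traces a strictly convex curve (the functions $t \mapsto t^{-\gamma}$ form a Chebyshev-type system in $\gamma$), so that the three points are never collinear and the ``slope ratio'' is injective. Granting this, $R(\gamma_n) \to R(\gamma_0)$ together with injectivity and continuity of $R^{-1}$ yields $\gamma_n \to \gamma_0$. Then from $q_1^{(n)} - q_2^{(n)} = \sigma_n \cdot g(\gamma_n)$ with $g$ continuous and nonvanishing we recover $\sigma_n \to \sigma_0$, and finally from $q_2^{(n)} = \mu_n + \sigma_n h(\gamma_n)$ we get $\mu_n \to \mu_0$. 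Continuity of all the elementary functions involved (including at $\gamma = 0$, which must be checked by L'Hôpital-type limits) closes the argument, and one should note $\sigma_0 > 0$ so the limit lies in $\Theta$.

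The main obstacle is the monotonicity/injectivity of $R(\gamma)$: this is the one genuinely non-routine step, and it is what rules out two distinct GEV laws sharing three quantiles. One could instead argue it more softly via a compactness/tightness route — weak convergence implies the sequence $(\theta_n)$ is bounded (boundedness of $\sigma_n$ away from $0$ and $\infty$ and of $\mu_n,\gamma_n$ follows from the convergence of the $q_i^{(n)}$), so every subsequence has a convergent sub-subsequence $\theta_{n_k} \to \theta_\ast \in \overline{\Theta}$; by continuity of $\theta \mapsto Q(\point;\theta)$ on $\Theta$ the limit must satisfy $Q(p;\theta_\ast) = Q(p;\theta_0)$ for a dense set of $p$, hence $F(\point;\theta_\ast) = F_0$, and then uniqueness of the GEV parametrization (a standard fact) forces $\theta_\ast = \theta_0$; one also checks $\sigma_\ast \ne 0$ so $\theta_\ast \in \Theta$. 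Since every subsequential limit equals $\theta_0$, the whole sequence converges. I would present the compactness version as the main line and relegate the explicit three-quantile computation to a remark, as it is the cleaner and more robust argument.
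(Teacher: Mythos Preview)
Your proposal is correct, and the compactness route you prefer is close in spirit to what the paper does for the shape parameter. The main difference is that the paper exploits a shortcut you missed: from \eqref{eq:gev:Q} one has $Q(e^{-1};\theta)=\mu$, so a single quantile at $p=e^{-1}$ gives $\mu_n\to\mu_0$ immediately, with no need to take differences of three quantiles to eliminate the location. After subtracting $\mu_n$, the paper looks at the ratio $(x^{\gamma_n}-1)/(y^{\gamma_n}-1)$ and then runs exactly the kind of subsequence argument you sketch (boundedness of $(\gamma_n)$, then identification of every subsequential limit) to get $\gamma_n\to\gamma_0$, and finally reads off $\sigma_n\to\sigma_0$. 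So your compactness version and the paper's proof coincide for $\gamma$ and $\sigma$; your three-quantile injectivity computation for $R(\gamma)$ is a valid alternative but heavier than necessary, since the special value $p=e^{-1}$ removes one unknown for free.
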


\begin{proof}
By continuity of the GEV distribution, weak convergence is the same as pointwise convergence, and thus $\lim_{n \to \infty} F_n(x) = F_0(x)$ for all $x\in\rset$.

Recall the expression \eqref{eq:gev:Q} of the quantile function $Q(\point;\theta)$. Pointwise convergence of monotone functions implies pointwise convergence of their inverses at continuity points of the limit \cite[Chapter~0]{resnick1987extreme}. Setting $p = e^{-1}$, we obtain 
\[
  \mu_n = Q(e^{-1}; \theta_n) \to Q(e^{-1}; \theta_0) = \mu_0, \qquad n \to \infty.
\]
As a consequence, for $p \in (0,1)$,
\begin{align}
 \nonumber
  Q(p; \gamma_n, 0, \sigma_n) 
  &= Q(p; \theta_n) - \mu_n \\
  &\to 
  Q(p; \theta_0) - \mu_0
  = Q(p; \gamma_0, 0, \sigma_0),
  \qquad n \to \infty.
\label{eq:cvQuantile-mu}
\end{align}
This implies that for $x, y > 0$ such that $y \neq 1$, 
\begin{equation*}
  \lim_{n \to \infty} \frac{x^{\gamma_n} -1}{y^{\gamma_n }-1} 
  = \frac{x^{\gamma_0} -1}{y^{\gamma_0}-1}.  
\end{equation*}
For $\gamma_n = 0$, the above expressions are to be understood as $\log(x)/\log(y)$. A subsequence argument then yields that $(\gamma_n)_n$ must be bounded, and a second subsequence argument confirms that $\gamma_n \to \gamma_0$ as $n \to \infty$. Combine this convergence relation with \eqref{eq:cvQuantile-mu} and use the identity $Q_n(p; \gamma_n, 0, \sigma_n) = \sigma_n \, Q(p; \gamma_n, 0, 1)$ to conclude that $\sigma_n \to \sigma_0$ as $n \to \infty$.
\end{proof}

\section*{Acknowledgments}

Part of A. Sabourin's work  has been  funded by the `AGREED' project from the PEPS JCJC program (INS2I, CNRS) and by the chair `Machine Learning for Big Data' from Télécom ParisTech. J. Segers gratefully acknowledges funding by contract ``Projet d'Act\-ions de Re\-cher\-che Concert\'ees'' No.\ 12/17-045 of the ``Communaut\'e fran\c{c}aise de Belgique'' and by IAP research network Grant P7/06 of the Belgian government (Belgian Science Policy).


%
\bibliographystyle{apalike} 
\bibliography{usc3}

\begin{thebibliography}{}

\bibitem[Beer, 1993]{beer1993topologies}
Beer, G. (1993).
\newblock {\em Topologies on Closed and Closed Convex Sets}, volume 268 of {\em
  Mathematics and its Applications}.
\newblock Kluwer Academic Publishers Group, Dordrecht.

\bibitem[Davison and Gholamrezaee, 2012]{davison2012geostatistics}
Davison, A.~C. and Gholamrezaee, M.~M. (2012).
\newblock Geostatistics of extremes.
\newblock {\em Proceedings of the Royal Society A: Mathematical, Physical and
  Engineering Science}, 468(2138):581--608.

\bibitem[de~Haan, 1984]{dehaan:1984}
de~Haan, L. (1984).
\newblock A spectral representation for max-stable processes.
\newblock {\em The Annals of Probability}, 12(4):1194--1204.

\bibitem[Gin{\'e} et~al., 1990]{gine1990max}
Gin{\'e}, E., Hahn, M.~G., and Vatan, P. (1990).
\newblock Max-infinitely divisible and max-stable sample continuous processes.
\newblock {\em Probability theory and related fields}, 87(2):139--165.

\bibitem[Huser and Davison, 2014]{huser2014space}
Huser, R. and Davison, A.~C. (2014).
\newblock Space--time modelling of extreme events.
\newblock {\em Journal of the Royal Statistical Society: Series B (Statistical
  Methodology)}, 76(2):439--461.

\bibitem[McFadden, 1981]{mcfadden:1981}
McFadden, D. (1981).
\newblock Econometric models of probabilistic choice.
\newblock In Manski, C. and McFadden, D., editors, {\em Structural Analysis of
  Discrete Data with Econometric Applications}, pages 198--272. MIT Press,
  Cambridge, MA.

\bibitem[McFadden, 1989]{mcfadden:1989}
McFadden, D. (1989).
\newblock Econometric modeling of locational behavior.
\newblock {\em Annals of Operations Research}, 18:3--16.

\bibitem[Molchanov, 2005]{molchanov2005theory}
Molchanov, I. (2005).
\newblock {\em Theory of Random Sets}.
\newblock Probability and its Applications (New York). Springer-Verlag London,
  Ltd., London.

\bibitem[Norberg, 1987]{norberg1987semicontinuous}
Norberg, T. (1987).
\newblock Semicontinuous processes in multi-dimensional extreme value theory.
\newblock {\em Stochastic Processes and Their Applications}, 25:27--55.

\bibitem[Pickands, 1971]{pickands:1971}
Pickands, III, J. (1971).
\newblock The two-dimensional {P}oisson process and extremal processes.
\newblock {\em J. Appl. Probability}, 8:745--756.

\bibitem[Resnick, 1987]{resnick1987extreme}
Resnick, S.~I. (1987).
\newblock {\em Extreme Values, Regular Variation, and Point Processes}.
\newblock Springer-Verlag, New York.

\bibitem[Resnick and Roy, 1991]{resnick1991random}
Resnick, S.~I. and Roy, R. (1991).
\newblock Random usc functions, max-stable processes and continuous choice.
\newblock {\em The Annals of Applied Probability}, pages 267--292.

\bibitem[Rockafellar et~al., 1998]{rockafellar1998variational}
Rockafellar, R.~T., Wets, R. J.-B., and Wets, M. (1998).
\newblock {\em Variational analysis}, volume 317.
\newblock Springer.

\bibitem[R\"{u}schendorf, 2009]{ruschendorf:2009}
R\"{u}schendorf, L. (2009).
\newblock On the distributional transform, {Sklar}'s theorem, and the empirical
  copula process.
\newblock {\em Journal of Statistical Planning and Inference}, 139:3921--3927.

\bibitem[Salinetti and Wets, 1986]{salinetti1986convergence}
Salinetti, G. and Wets, R. J.-B. (1986).
\newblock On the convergence in distribution of measurable multifunctions
  (random sets) normal integrands, stochastic processes and stochastic infima.
\newblock {\em Mathematics of Operations Research}, 11(3):385--419.

\bibitem[Schlather, 2002]{schlather2002models}
Schlather, M. (2002).
\newblock Models for stationary max-stable random fields.
\newblock {\em Extremes}, 5(1):33--44.

\bibitem[Sklar, 1959]{sklar59}
Sklar, M. (1959).
\newblock Fonctions de r\'epartition \`a $n$ dimensions et leurs marges.
\newblock {\em Publ.\ Inst.\ Statist.\ Univ.\ Paris}, 8:229\textendash231.

\bibitem[Vervaat, 1986]{vervaat:1986}
Vervaat, W. (1986).
\newblock Stationary self-similar extremal processes and random semicontinuous
  functions.
\newblock In {\em Dependence in probability and statistics ({O}berwolfach,
  1985)}, volume~11 of {\em Progr. Probab. Statist.}, pages 457--473.
  Birkh\"auser Boston, Boston, MA.

\bibitem[Vervaat, 1988a]{vervaat:1988}
Vervaat, W. (1988a).
\newblock Narrow and vague convergence of set functions.
\newblock {\em Statist. Probab. Lett.}, 6(5):295--298.

\bibitem[Vervaat, 1988b]{vervaat1988random}
Vervaat, W. (1988b).
\newblock Random upper semicontinuous functions and extremal processes.
\newblock {\em Department of Mathematical Statistics}, (R 8801):1--43.

\bibitem[Vervaat and Holwerda, 1997]{vervaat:holwerda:1997}
Vervaat, W. and Holwerda, H., editors (1997).
\newblock {\em Probability and Lattices}, volume 110 of {\em CWI Tract}.
\newblock Stichting Mathematisch Centrum, Centrum voor Wiskunde en Informatica,
  Amsterdam.

\end{thebibliography}

\end{document}